\newcommand{\integernonnegative}{\ensuremath{\mathbb{Z}}_{\ge 0}}
\newcommand{\real}{\ensuremath{\mathbb{R}}}
\newcommand{\until}[1]{\{1,\dots, #1\}}
\renewcommand{\epsilon}{\varepsilon}
\let\proof\@undefined                        
\let\endproof\@undefined                  
\newtheorem{prop}{Proposition} 
\newtheorem{thm}{Theorem}
\newtheorem{assumption}{Assumption}
\newtheorem{lemma}{Lemma}
\newtheorem{defn}{Definition}
\newtheorem{rem}{Remark}
\newtheorem{problem}{Problem}
\let\oldReturn\Return
\renewcommand{\Return}{\State\oldReturn}
\let\oldbibliography\thebibliography
\renewcommand{\thebibliography}[1]{%
  \oldbibliography{#1}%
}
\newcommand{\intreal}{\mathbb{I}\real}
\renewcommand{\P}{\mathcal{P}}
\newcommand{\Q}{\mathcal{Q}}
\newcommand{\X}{\mathcal{X}}
\newcommand{\U}{\mathcal{U}}
\renewcommand{\H}{\mathcal{H}}
\renewcommand{\S}{\mathcal{S}}
\newcommand{\N}{\mathcal{N}}
\newcommand{\queue}{Q}
\DeclareMathOperator{\conv}{conv}
\DeclareMathOperator{\pop}{pop\_front}
\DeclareMathOperator{\pushfront}{push\_front}
\DeclareMathOperator{\pushback}{push\_back}
\DeclareMathOperator{\bisect}{bisect}
\DeclareMathOperator{\checked}{checked}
\let\mid\undefined
\DeclareMathOperator{\mid}{mid}
\newcommand\oprocendsymbol{\hbox{$\bullet$}}
\newcommand\oprocend{\relax\ifmmode\else\unskip\hfill\fi\oprocendsymbol}
\begin{document}

\title{\Large Computing Controlled Invariant Sets of Nonlinear
  Control-Affine Systems}
\author{Scott Brown \and
  Mohammad Khajenejad \and
  Sze Zheng Yong \and
  Sonia Mart{\'\i}nez
  \thanks{S. Brown, M, Khajenejad and S. Mart{\'\i}nez are with the Mechanical and Aerospace Engineering Department,
    University of California San Diego, San Diego, CA, USA.
    \texttt{\{sab007,mkhajenejad,soniamd\}@ucsd.edu}}\and
  \thanks{S. Z. Yong is with the Mechanical and Industrial Engineering
    Department, Northeastern University.
    \texttt{s.yong@northeastern.edu}}}

\date{}

\maketitle

\begin{abstract}
  In this paper, we consider the computation of controlled invariant
  sets (CIS) of discrete-time nonlinear control-affine systems. We
  propose an iterative refinement procedure based on polytopic
  inclusion functions, which is able to approximate the maximal
  controlled invariant set to within a guaranteed precision. In
  particular, this procedure allows us to guarantee the invariance of
  the resulting near-maximal CIS while also computing sets of control
  inputs which enforce the invariance. Further, we propose an
  accelerated version of this procedure which refines the CIS by
  computing backward reachable sets of individual components of set
  unions, rather than all at once. This reduces the total number of
  iterations required for convergence, especially when compared with
  existing methods. Finally, we compare our methods to a sampling
  based approach and demonstrate the improved accuracy and faster
  convergence.
\end{abstract}

\section{Introduction}
Invariance is an important concept for ensuring robustness and safety
of control systems. For a dynamical system, a set is (\emph{forward})
\emph{invariant} if every trajectory starting in that set remains in
the set for all time. For control systems, this notion can be
generalized with the determination of a control input which is able to
render a set invariant, leading to the notion of a \emph{controlled
  invariant set} (CIS). For systems which are subject to uncertainty
or noise, the concept of a \emph{robust controlled invariant set}
(RCIS) is critical for safety, as it guarantees invariance in the
presence of disturbances.

Controlled invariant sets have been thoroughly studied for linear
systems, e.g., \cite{blanchini1999invariance,fiacchini2018invariant,
  rungger2017invariant,anevlavis2019, anevlavis2021}. Many of these
methods employ iterative procedures based on a one-step backward
operator \cite{fiacchini2018invariant, rungger2017invariant} to find
backward reachable sets of the system for computing the CIS with high
precision.
In order to improve the computation time for high-dimensional
systems, other non-iterative techniques have also been proposed
which rely on lifting to a higher dimensional space to compute the CIS
in closed form and projecting the resulting set to the original
domain, e.g., \cite{anevlavis2019, anevlavis2021}.

On the other hand, determining controlled invariant sets of nonlinear
systems remains a significant challenge. Some works employ convex or
zonotopic approximations, e.g.,
\cite{fiacchini2010invariant,schafer2022scalable}, in order to reduce
the computational complexity. However, as the maximal controlled
invariant sets are nonconvex in general, these methods can be overly
conservative.

Another related work pertains to the study of invariant sets of
\emph{switched} systems, e.g.,
\cite{jang2022invariant,li2018invariant,bravo2005invariant}, where the
input controls switching between a finite number of modes. In that
case, the input is easily determined by considering all possible modes
and selecting those which lead to invariance
\cite{li2018invariant}. By sampling a continuous set of inputs, these
methods can be applied to more general nonlinear systems, but the
accuracy and scalability may be limited \cite{bravo2005invariant}.
These methods result in sets that are guaranteed to be invariant, but
the sampling is an additional source of computational complexity as it
must be fine enough to properly capture the nonlinear behavior of the
system. As a result, these methods are difficult to apply to systems
with multiple inputs.

In this paper, we propose two iterative algorithms to compute the
near-maximal (non-convex) controlled invariant set of control affine
systems up to a guaranteed precision, \emph{without} sampling the set
of allowable inputs. Inspired by \cite{li2018invariant}, our methods
use a bisection approach to over- and under-approximate the one-step
backward reachable set operator. The main idea of the approach is to
compute the \emph{forward} reachable set of the region of interest,
which is described by a union of intervals. The parts of this forward
set that are entirely contained in the original set are used as an
under-approximation of the backward reachable set. To check the
containment, we use polyhedral over-approximations of reachable sets,
which allows us to cast the problem of determining the control input
as a translation of a polyhedron. This technique allows us to
determine a \emph{continuous}, rather than sampled, set of invariance
enforcing control inputs. In addition, we leverage the structure of
set unions in the accelerated version, which has the potential to
significantly reduce the total number of iterations required for
convergence, when compared with existing methods.

\subsection{Notation}
Let $\integernonnegative$ be the set of nonnegative integers, and
$\real^n$ and $\real^{n \times p}$ be the $n$-dimensional Euclidean
space, and the set of $n \times p$ matrices, respectively. By means of
$\mathcal{B}_r$, we denote the $\infty$-norm hyperball of radius
$r$. We make use of $\intreal^n$ to denote the collection of
(multidimensional) intervals of $\real^n$, and denote its elements as
$[x]\triangleq [\underline{x},\overline{x}] \in \intreal^n$ with lower
and upper bounds $\underline{x} $ and $\overline{x}$. The function
$w([x])$ measures interval width (i.e.,
$w([x])=\|\overline{x}-\underline{x}\|$ with any vector norm
$\|\cdot\|$), $\mid([x])$ selects the midpoint of $[x]$ (i.e.,
$\mid([x])=\frac{1}{2}(\overline{x}+\underline{x})$), and a function
in brackets $[f]([x])$ represents an interval inclusion function
(cf. Definition \ref{def:inclusion}). The operator $\oplus$ denotes
the Minkowski sum of two sets, and $\ominus$ denotes the Pontryagin
difference. For a function $f: \real^n \to \real^m$ and a set
$\mathcal{S} \subset \real^n$, $f_0(\mathcal{S})$ denotes the image of
$\mathcal{S}$ under $f_0$.  For a bounded polyhedron (polytope) $\P$,
we use $H_\P$ and $b_\P$ to denote the components defining the
halfspace representation $\P = \{x \ : \ H_\P x \le b_\P\}$. The
symbol $V_\P$ denotes the vertices of $\P$. Finally, $\conv\X$ denotes
the convex hull of the set $\X$.

\section{Preliminaries}
This section introduces preliminary notions that will be used throughout
the paper. We begin by defining inclusion functions, which are
critical for tractably approximating the images of sets under
nonlinear functions.
\begin{defn} \label{def:inclusion}
  Given a function $f: \real^n \to \real^m$, an \emph{inclusion
    function} is an interval function $[f]: \intreal^n \to \intreal^m$
  that satisfies
  \begin{align*} [f]([x]) \supseteq f([x]), \ \forall [x] \in
    \intreal^n,
  \end{align*}
  where $f([x])$ denotes the exact image of $[x]$ under $f$.
\end{defn}
The reader is referred to \cite[Section 2.4]{jaulinapplied} and \cite{khajenejad2023tight}
for a thorough discussion of different types of inclusion functions,
such as natural, centered, etc., as well as mixed-monotone
decomposition-based inclusion functions. The results of this paper are
not specific to any one type of inclusion function. However, different
inclusion functions may produce more or less precise approximations
depending on the system.

\subsection{Translating Polytopes}
This section defines two different operations on polytopes:
translating one into another, and translating one so it intersects
another. These will be used in our algorithm to determine safe control
inputs.

\begin{defn}
    Given two polytopes $\P$ and $\Q$, the set of translations of $\P$
    that \emph{insert} $\P$ into $\Q$ is denoted by
  \begin{align*}
  \mathcal{I}(\P, \Q) \triangleq \{r \in \real^n : \P \oplus \{r\} \subseteq \Q\}.
  \end{align*}
Similarly, the set of translations of $\P$ that
  \emph{overlap} $\P$ with $\Q$ is defined as
  \begin{align*}
\mathcal{O}(\P, \Q)
  \triangleq \{s \in \real^n : \P \oplus \{s\} \cap \Q \ne \emptyset \}.
  \end{align*}
\end{defn}

\begin{prop}[\hspace{-0.001em}{\cite[Theorem 2.3]{gilbert1998}}]\label{prop:into}
  For polytopes $\P$ and $\Q$,
  \begin{align*}
    \mathcal{I}(\P, \Q) = \{r \in \real^n : H_\Q r \leq b_\Q - \beta \},
\end{align*}
where $\beta_i = \max_{v\in V_\P} (H_\Q)_i v$.  If $\P$ cannot be
embedded into $\Q$, then this results in
$\mathcal{I}(\P,\Q) = \emptyset$.
\end{prop}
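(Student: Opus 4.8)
The plan is to prove the characterization of $\mathcal{I}(\P,\Q)$ by directly unwinding the definition $\P \oplus \{r\} \subseteq \Q$ in terms of the halfspace representation $\Q = \{x : H_\Q x \le b_\Q\}$. First I would observe that $\P \oplus \{r\} \subseteq \Q$ holds if and only if every point of $\P \oplus \{r\}$ satisfies each of the defining inequalities of $\Q$; that is, for every row index $i$, we need $(H_\Q)_i (p + r) \le (b_\Q)_i$ for all $p \in \P$. Rearranging, this is equivalent to $(H_\Q)_i r \le (b_\Q)_i - (H_\Q)_i p$ for all $p \in \P$, and taking the worst case over $p$, to $(H_\Q)_i r \le (b_\Q)_i - \max_{p \in \P} (H_\Q)_i p$. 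Defining $\beta_i \triangleq \max_{p \in \P}(H_\Q)_i p$ then yields exactly the claimed polyhedral description $\mathcal{I}(\P,\Q) = \{r : H_\Q r \le b_\Q - \beta\}$.

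The one genuine step that deserves care is replacing the maximum over all $p \in \P$ by the maximum over the finite vertex set $V_\P$, i.e. justifying $\max_{p\in\P}(H_\Q)_i p = \max_{v \in V_\P}(H_\Q)_i v$. This is the standard fact that a linear functional on a compact polytope attains its maximum at a vertex: since $\P$ is a bounded polyhedron, $\P = \conv V_\P$, and any $p \in \P$ is a convex combination $p = \sum_j \lambda_j v_j$ with $\lambda_j \ge 0$, $\sum_j \lambda_j = 1$; then $(H_\Q)_i p = \sum_j \lambda_j (H_\Q)_i v_j \le \max_j (H_\Q)_i v_j$, so the supremum over $\P$ does not exceed the maximum over $V_\P$, while the reverse inequality is trivial because $V_\P \subseteq \P$. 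Hence $\beta_i = \max_{v \in V_\P}(H_\Q)_i v$ is finite and well-defined, which also implicitly uses that $\P$ is nonempty and bounded so that $V_\P$ is finite and nonempty.

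Finally I would address the degenerate case: if the system $\{r : H_\Q r \le b_\Q - \beta\}$ has no solution, then by the equivalence just established there is no translation inserting $\P$ into $\Q$, so $\mathcal{I}(\P,\Q) = \emptyset$, matching the last sentence of the statement. Conversely, emptiness of $\mathcal{I}(\P,\Q)$ forces the linear system to be infeasible, so the two notions of "cannot be embedded" coincide. No real obstacle is expected here; the only subtlety worth stating explicitly is the vertex-maximization argument, and perhaps a remark that the identity holds verbatim even when $\Q$ is unbounded (only $\P$ needs to be a bounded polytope for $\beta$ to be finite), though for the paper's purposes both are polytopes. This argument is essentially that of \cite[Theorem 2.3]{gilbert1998}, reproduced here for completeness.
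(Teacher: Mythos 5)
Your argument is correct and is exactly the standard proof of this result; the paper itself does not prove Proposition~\ref{prop:into} but simply cites \cite[Theorem 2.3]{gilbert1998}, and your row-by-row unwinding of $\P \oplus \{r\} \subseteq \Q$ together with the vertex-maximization of a linear functional over $\conv V_\P$ is precisely that cited argument. No gaps; the remark that only $\P$ need be bounded for $\beta$ to be finite is a correct (if unneeded) bonus.
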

\begin{prop}\label{prop:touch}
  Given two polytopes $\P$ with $\Q$,
\begin{align*}
  \mathcal{O}(\P, \Q) = \left\{s \in \real^n : \begin{bmatrix}H_\Q \\ - H_\P \end{bmatrix} s \leq \begin{bmatrix}b_\Q-\alpha \\ b_\P-\gamma \end{bmatrix} \right\},
\end{align*}
where
$\alpha_i = \min_{v\in V_\P} (H_\Q)_i v$ and $\gamma_i = \min_{v\in
  V_\Q} (H_\P)_i v$.
\end{prop}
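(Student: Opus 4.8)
The plan is to first recognize $\mathcal{O}(\P,\Q)$ as a Minkowski sum and then read off its halfspace description from the summands. Note that $(\P\oplus\{s\})\cap\Q\neq\emptyset$ holds exactly when there are $p\in\P$ and $q\in\Q$ with $p+s=q$, \ie when $s=q-p$ for some such $p,q$; hence
\[
\mathcal{O}(\P,\Q)=\Q\oplus(-\P),
\]
which is again a bounded polytope. Writing $\mathcal{R}$ for the polytope on the right-hand side of the claimed identity, it therefore suffices to prove $\Q\oplus(-\P)=\mathcal{R}$ by establishing both inclusions.

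The inclusion $\mathcal{O}(\P,\Q)\subseteq\mathcal{R}$ is the routine half. Take $s=q-p$ with $q\in\Q$, $p\in\P$. For the $i$-th row of $H_\Q$ we get $(H_\Q)_i s=(H_\Q)_i q-(H_\Q)_i p\le (b_\Q)_i-\min_{x\in\P}(H_\Q)_i x$, and since a linear functional attains its minimum over a polytope at a vertex, $\min_{x\in\P}(H_\Q)_i x=\min_{v\in V_\P}(H_\Q)_i v=\alpha_i$, so $(H_\Q)_i s\le (b_\Q)_i-\alpha_i$. Symmetrically, $-(H_\P)_i s=(H_\P)_i p-(H_\P)_i q\le (b_\P)_i-\min_{v\in V_\Q}(H_\P)_i v=(b_\P)_i-\gamma_i$. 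Stacking the two families shows $s\in\mathcal{R}$; this half alone already certifies that $\mathcal{R}$ is a sound outer approximation of the overlap set.

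For the reverse inclusion $\mathcal{R}\subseteq\mathcal{O}(\P,\Q)$, fix $s\in\mathcal{R}$; the goal is to exhibit $p\in\P$, $q\in\Q$ with $q-p=s$, equivalently to show that $\P\oplus\{s\}$ and $\Q$ are not strictly separated by a hyperplane. I would argue by contradiction through Farkas' lemma: the system $H_\P p\le b_\P$, $H_\Q(p+s)\le b_\Q$ is infeasible iff there is a nonzero pair $u,v\ge 0$ with $u^{\top}H_\P+v^{\top}H_\Q=0$ and $u^{\top}b_\P+v^{\top}(b_\Q-H_\Q s)<0$. One first discards the degenerate cases $u=0$ and $v=0$, each of which would force $\P$ or $\Q$ to be empty, so both $u$ and $v$ are nonzero. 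Setting $\ell^{\top}:=v^{\top}H_\Q=-u^{\top}H_\P$, aggregating the $H_\Q$-block of the inequalities defining $\mathcal{R}$ with weights $v$ yields $\ell^{\top}s\le v^{\top}b_\Q-v^{\top}\alpha$, aggregating the $-H_\P$-block with weights $u$ yields $\ell^{\top}s\le u^{\top}b_\P-u^{\top}\gamma$, and these are to be played against the certificate inequality $\ell^{\top}s>u^{\top}b_\P+v^{\top}b_\Q$ together with the vertex definitions of $\alpha$, $\gamma$ to reach a contradiction.

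The main obstacle is precisely closing that last step, and it is structural rather than a matter of calculation: it amounts to showing that the rows of $H_\Q$ and of $-H_\P$ already supply every facet normal of $\Q\oplus(-\P)$, equivalently that any hyperplane separating $\P\oplus\{s\}$ from $\Q$ can be taken normal to a facet of $\P$ or of $\Q$ --- a separating-axis-type statement. The reassuring check is that for each such normal $\ell$ the corresponding bound in $\mathcal{R}$ coincides with the support value $h_{\Q\oplus(-\P)}(\ell)=h_\Q(\ell)-\min_{x\in\P}\langle \ell,x\rangle$ (redundant rows of $H_\P$ or $H_\Q$ only add harmless slack), so the whole argument reduces to verifying that these facet-normal directions of the two summands suffice to carve out the Minkowski sum. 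I expect this sufficiency claim --- and any structural assumptions on $\P$, $\Q$ it might require --- to be where the substance of the proof lies, not the bookkeeping with $u$, $v$, $\alpha$, $\gamma$.
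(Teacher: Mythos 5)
Your reduction of $\mathcal{O}(\P,\Q)$ to the Minkowski sum $\Q\oplus(-\P)$ and your proof of the inclusion $\mathcal{O}(\P,\Q)\subseteq\mathcal{R}$ (your notation for the right-hand side) are correct, and you have put your finger on exactly the right obstacle: everything hinges on whether the facet normals of $\P$ and $\Q$ exhaust the facet normals of $\Q\oplus(-\P)$, equivalently on whether a separating hyperplane between $\P\oplus\{s\}$ and $\Q$ can always be chosen normal to a facet of one of them. The paper's own proof follows the same route but does not close this gap either: it reduces Proposition~\ref{prop:touch} to Proposition~\ref{prop:hyper} (two polytopes intersect iff each meets every facet halfspace of the other) combined with Proposition~\ref{prop:half-touch}, and the sufficiency argument for Proposition~\ref{prop:hyper} dismisses the separating hyperplane in one sentence --- but conditions 1) and 2) there only preclude separators normal to a facet of $\P$ or $\Q$, which is precisely the claim you flagged as the unproven substance.

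That claim is in fact false for $n\ge 3$, so the reverse inclusion cannot be completed as stated. Take $\P=\{x\in\real^3: |x_1|\le 10,\ |x_2|+|x_3|\le 1\}$ and $\Q=\{x\in\real^3: |x_2|\le 10,\ |x_1|+|x_3-3|\le 1\}$. These are disjoint (points of $\P$ have $x_3\le 1$, points of $\Q$ have $x_3\ge 2$), yet each meets all six facet halfspaces of the other (check the vertices $(\pm 10,0,0)$ of $\P$ against the constraints $\pm x_1 \mp x_3\le -2$ of $\Q$, and $(0,\pm 10,3)$ of $\Q$ against $\mp x_2\pm x_3\le 1$ of $\P$); hence $s=0\in\mathcal{R}\setminus\mathcal{O}(\P,\Q)$. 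Structurally, in $\real^3$ the sum $\Q\oplus(-\P)$ acquires facets whose normals are cross products of an edge direction of $\Q$ with one of $\P$ (here $e_1\times e_2=e_3$, the actual separating direction), and these are not rows of $H_\Q$ or $-H_\P$; this is the same reason the separating-axis test for $3$-D polytopes must include edge--edge axes. So the proposition holds in the plane but only the outer approximation $\mathcal{O}(\P,\Q)\subseteq\mathcal{R}$ survives in general --- which, to your credit, is the half you did prove, and it is the half that preserves soundness downstream: in Lemma~\ref{lem:u} the set $\mathcal{O}(\overline{P}_0([x]),\Q)$ is subtracted, so replacing it by the superset $\mathcal{R}$ only discards some admissible inputs and keeps $S\U([x])$ a valid under-approximation (though the stated equivalence there then weakens to an implication).
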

\begin{proof}
  See Appendix~\ref{sec:prop-proof}.
\end{proof}
\section{Invariance Control Problem}
Here we introduce the class of systems under consideration and define
the concept of controlled invariance. Specifically, we consider nonlinear control
affine systems of the form
\begin{align}
  \label{eq:sys}
  x_{k+1} = f(x_k, u_k) \triangleq f_0(x_k) + \sum_{i = 1}^{m}g_i(x_k)u_{i,k},
\end{align}
where $x \in \real^n$ is the state and $u \in \U \subset \real^m$ is
the input.  We assume that $\U$ is a compact interval, and we will
restrict our attention to a region of interest,
$\Omega \subset \real^n$, which we assume to be given as a finite
union of compact intervals.
\begin{assumption}[]\label{ass:lipschitz}
  The functions $f_0$, and $g_1$, $\dots$, $g_m$ are Lipschitz continuous, i.e.,
  \begin{align*}
    \forall i \in \{1,\dots,m\}, \ \exists &L_i \text{ s.t. } \|g_i(x) - g_i(y)\| \le L_i \|x - y\|,\\
    \text{and }&L \text{ s.t. }  \|f_0(x) - f_0(y)\| \le L \|x - y\|,
  \end{align*}
  for every $x, y \in \Omega$.
\end{assumption}
Next we define the precise notions of invariance which we will
consider in this work.
\begin{defn}
  A set $\X \subset \real^n$ is \emph{controlled invariant} with
  respect to the dynamics \eqref{eq:sys} if for every
  $x_0 \in \X$, there exists an input $u$ such that
  $f(x_0, u) \in \X$.
\end{defn}

There are many computational challenges associated with computing
controlled invariant sets, many of which arise due to the infinite
precision required to adequately handle regions near the boundary of
the set. As such, it is convenient to modify the definition to
incorporate a robustness margin $r$.
\begin{defn}
  A set $\X \subset \real^n$ is \emph{$r$-robustly controlled
    invariant} for system \eqref{eq:sys} if for every $x_0 \in \X$,
  there exists an input $u$ such that
  $f(x_0, u) \in \X \ominus \mathcal{B}_r$.
\end{defn}
Intuitively, in order to be robustly invariant, every point must be
mapped into the interior of the set, at some distance $r$ from the
boundary.

We are ready to formally state the problem which we aim to address in this paper.
\begin{problem}[Controlled Invariant Set Computation]
  For a system \eqref{eq:sys} that satisfies Assumption
  \ref{ass:lipschitz} and a region of interest
  $\Omega \subset \real^n$, given by a union of compact intervals,
  compute 
  the maximal controlled invariant set contained in $\Omega$ up to a
  guaranteed precision. Additionally, compute the corresponding set of
  control inputs that enforces this invariance.
\end{problem}

\begin{rem}
  {\rm For ease of exposition, we do not consider any noise or
    uncertainty in the system dynamics \eqref{eq:sys}. However, it is
    straightforward to include robustness to bounded noise in our
    algorithm by increasing the robustness margin $r$.} 
\end{rem}

\section{Computation of Controlled Invariant Sets}
This section starts by reviewing a well-known iterative procedure for
computing maximal controlled invariant sets
\cite{bertsekas1972invariant}. After this, we describe our main
contribution, which approximates the operator used in each iteration.

\begin{defn}
  The \emph{pre-set}, or the \emph{one-step backward reachable set} of
  a set $\Omega \subset \real^n$ is defined as
  \begin{align*}
    Q(\Omega) \triangleq \left\{x \in \real^n : \exists u \in \U \text{ s.t. } f(x, u) \in \Omega \right\}.
  \end{align*}
\end{defn}
We further define the operator
\begin{align*}
  I(\Omega) \triangleq Q(\Omega) \cap \Omega,
\end{align*}
which will enable computation of the maximal controlled invariant
set. Repeated application is denoted as
$I^i(\Omega) = I(I^{i-1}(\Omega))$, $i \in \integernonnegative$, with
$I^0(\Omega) = \Omega$. We also define
$I_r(\Omega) \triangleq Q(\Omega \ominus \mathcal{B}_r) \cap \Omega$.

\begin{lemma}[\hspace{-0.001em}{\cite[Special Case of Proposition 4]{bertsekas1972invariant}}]
  If $\Omega \subset \real^n$ is closed, then
  \begin{align*}
    I^\infty \triangleq \lim_{i\to\infty} I^i(\Omega)
  \end{align*}
  is the maximal controlled invariant set contained in $\Omega$.
\end{lemma}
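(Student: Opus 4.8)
The statement to prove is the classical fact that iterating the operator $I(\Omega) = Q(\Omega)\cap\Omega$ yields, in the limit, the maximal controlled invariant set inside $\Omega$. The plan is to establish three facts and combine them: (i) the sequence $\{I^i(\Omega)\}_{i\ge0}$ is nested and decreasing, so the limit $I^\infty = \bigcap_{i\ge0} I^i(\Omega)$ exists as a set; (ii) $I^\infty$ is controlled invariant; and (iii) any controlled invariant set $\X\subseteq\Omega$ is contained in $I^\infty$, which gives maximality.

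For (i), I would show $I(\Omega)\subseteq\Omega$ directly from the definition, and then argue by induction that $I^{i+1}(\Omega)\subseteq I^i(\Omega)$: $I$ is monotone (if $A\subseteq B$ then $Q(A)\subseteq Q(B)$, hence $I(A)\subseteq I(B)$), so applying monotonicity to $I^1(\Omega)\subseteq I^0(\Omega)=\Omega$ propagates the inclusion to all $i$. Thus $I^\infty \triangleq \lim_{i\to\infty} I^i(\Omega) = \bigcap_{i\ge0} I^i(\Omega)$ is well-defined, and since each $I^i(\Omega)$ is closed (the preimage of a closed set under the continuous map $f(\cdot,u)$, intersected appropriately — here one uses that $\U$ is compact so $Q$ preserves closedness), $I^\infty$ is closed.

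For (ii), take $x_0\in I^\infty$. Then $x_0\in I^{i+1}(\Omega)\subseteq Q(I^i(\Omega))$ for every $i$, so for each $i$ there is $u_i\in\U$ with $f(x_0,u_i)\in I^i(\Omega)$. By compactness of $\U$, extract a subsequence $u_{i_k}\to u^\star\in\U$; by continuity of $f$, $f(x_0,u_{i_k})\to f(x_0,u^\star)$. For any fixed $j$, all but finitely many terms $f(x_0,u_{i_k})$ lie in the closed set $I^j(\Omega)$, so the limit $f(x_0,u^\star)\in I^j(\Omega)$; since $j$ is arbitrary, $f(x_0,u^\star)\in I^\infty$. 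Hence $I^\infty$ is controlled invariant. For (iii), if $\X\subseteq\Omega$ is controlled invariant, an easy induction shows $\X\subseteq I^i(\Omega)$ for all $i$: the base case is the hypothesis, and if $\X\subseteq I^i(\Omega)$ then every $x\in\X$ admits $u$ with $f(x,u)\in\X\subseteq I^i(\Omega)$, so $x\in Q(I^i(\Omega))$, and also $x\in\X\subseteq\Omega$, giving $x\in I^{i+1}(\Omega)$. Therefore $\X\subseteq\bigcap_i I^i(\Omega)=I^\infty$.

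The main obstacle is the compactness/continuity argument in step (ii): one must be careful that the input witnessing invariance at $x_0$ may differ across the iterates, so a naive argument fails, and the limiting argument genuinely needs compactness of $\U$ together with closedness of each $I^i(\Omega)$ — which in turn is where the closedness hypothesis on $\Omega$ (and compactness of $\U$, ensuring $Q$ of a closed set is closed) is used. A secondary subtlety is justifying that set-theoretic limit equals the intersection for a decreasing sequence, which is immediate but should be stated. Since the excerpt attributes the result to Bertsekas, I would present this as a self-contained specialization of \cite[Proposition 4]{bertsekas1972invariant} rather than reproving the general fixed-point theory.
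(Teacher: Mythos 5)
Your proposal is correct. Note, however, that the paper does not prove this lemma at all: it is imported verbatim as a special case of \cite[Proposition~4]{bertsekas1972invariant}, so there is no in-paper argument to compare against. Your self-contained proof is the standard one and mirrors the structure of the cited result: monotonicity of $Q$ gives the nested decreasing sequence whose limit is $\bigcap_i I^i(\Omega)$; maximality follows from the easy induction in your step (iii); and the only genuinely delicate point is invariance of the limit, where you correctly isolate the role of compactness of $\U$ (which the paper does assume) together with closedness of each iterate --- this is precisely the ``compactness condition'' under which Bertsekas's Proposition~4 holds, and without it the limit set can fail to be invariant. One small caveat: your parenthetical describing $Q(\Omega)$ as ``the preimage of a closed set under $f(\cdot,u)$'' is loose, since $Q(\Omega)=\bigcup_{u\in\U}f(\cdot,u)^{-1}(\Omega)$ is an uncountable union of closed sets and is not automatically closed; the sequential extraction argument you give immediately afterward (convergent subsequence of inputs, joint continuity of $f$) is the correct justification, so no actual gap results.
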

This result is the basis for many iterative algorithms, e.g.,
\cite{fiacchini2018invariant, rungger2017invariant,
  bertsekas1972invariant}, but there are several computational
challenges when dealing with nonlinear systems. In addition to
operations involving \emph{backward} reachability, our algorithm also
utilizes operations related to \emph{forward} reachability, which we
define here.

\begin{defn}
  The \emph{one-step forward reachable set} of a set
  $\Omega \subset \real^n$ is defined as
  \begin{align*}
    P(\Omega) \triangleq \{x \in \real^n : \exists x_0 \in \Omega, u \in \U
    \text{ s.t. } x = f(x_0, u)\}.
  \end{align*}
\end{defn}
The following definition restricts the previous reachable set to that of a
particular input, which is used later for determining a
suitable controller.
\begin{defn}
  For a given $u\in\U$, the \emph{fixed-control one-step forward
    reachable set} of a set $\Omega \subset \real^n$ is defined as
  \begin{align*}
    P_u(\Omega) \triangleq \{x \in \real^n :  \exists x_0 \in \Omega \text{ s.t. } x = f(x_0, u) \}.
  \end{align*}
\end{defn}
From the definitions, it is clear that
$\bigcup_{u\in\U}P_u(\Omega) = P(\Omega)$.

\subsection{Polyhedral Approximation of Reachable Sets}
Our algorithm will employ polyhedral over-approximations of
$P(\Omega)$ and $P_u(\Omega)$ to determine feasible control inputs
that can lead to invariance.

To this end, we use a decomposition of the function $f$, which we will
show to satisfy certain properties. This decomposition will vary
depending on the interval $[x]$ under consideration. We first compute
$A$ and $\phi$ such that
\begin{align}\label{eq:f}
  f_0(x) =  Ax + \phi(x), \ \forall x \in [x],
\end{align}
decomposing $f$ into a linear term plus a remainder. This is always
possible (since we can let $A = 0$) and can be done in multiple
different ways. For example, if $f$ is differentiable, this can be
done via linearization about the midpoint of the interval. Another
possibility is that $f$ has a bounded Jacobian matrix on $[x]$, in
which case we can apply a Jacobian sign-stable decomposition
\cite[Proposition 2]{moh2022intervalACC} to compute \eqref{eq:f}. The
method of decomposition will affect the accuracy of the final
approximation, as we will discuss at the end of this section. Then, by
using an inclusion function $[\Phi] : \intreal^n \to \intreal^n$
satisfying $[\Phi]([x]) \supseteq \phi([x])$, we can guarantee that
\begin{align}\label{eq:f_over}
  f_0([x]) \subseteq  A[x] \oplus [\Phi]([x]),
\end{align}
where $A[x]$ denotes the exact (polytope) image of $[x]$ under the
linear map $A$.

We also decompose the individual input functions $g_i$. For an
inclusion function $[g_i]$, let $s_i = \mid([g_i]([x]))$ and
$[\Psi_i]([x]) = [g_i]([x]) \ominus \{s_i\}$, so that
\begin{align}\label{eq:g_i}
  g_i([x]) \subseteq \{s_i\} \oplus [\Psi_i]([x]),
\end{align}
and $[\Psi_i]([x])$ is centered at the origin. A centered
$[\Psi_i]([x])$ will result in a better approximation
later on. We will use the notation $S\in\real^{n\times m}$ and
$[\Psi]: \intreal^n \to \intreal^{n\times m}$ to denote the matrices
with columns $s_i$ and $[\Psi_i]$, respectively.

In order to guarantee the accuracy of the algorithm, we must first
compute bounds on the error of these
overapproximations. Assumption~\ref{ass:lipschitz} allows us to upper
bound the width of the resulting inclusion functions. \begin{prop}\label{prop:widths}
  Under Assumption~\ref{ass:lipschitz}, inclusion functions $[\Phi]$
  and $[\Psi_i]$, $i \in \until{m}$, and constants
  $\tilde{L}_i > 0, \ i \in \{0,\dots,m\}$ can be found so that
  \begin{align*}
    w([\Phi]([x]) \le \tilde{L}_0 w([x]) \ \text{ and } \
    w([\Psi_i]([x]) \le \tilde{L}_i w([x]).
  \end{align*}
  for every interval $[x] \subseteq \Omega$.
\end{prop}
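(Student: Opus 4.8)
The plan is to produce explicit inclusion functions, namely the minimal interval enclosures (interval hulls) of the exact images $\phi([x])$ and $g_i([x])$, and to bound their widths directly from Assumption~\ref{ass:lipschitz}. Throughout I measure interval widths in the $\infty$-norm; since all norms on $\real^n$ are equivalent, the general case follows after multiplying the constants below by fixed equivalence factors (and if $w$ and the Lipschitz constants are already taken with respect to the $\infty$-norm, no such factors appear). The single fact I need is this: if $h : \real^n \to \real^p$ is Lipschitz on $\Omega$ with constant $\ell$ and $[h]([x])$ denotes the interval hull of $h([x])$ — a valid inclusion function in the sense of Definition~\ref{def:inclusion} — then $w([h]([x])) \le \ell\, w([x])$ for every interval $[x] \subseteq \Omega$. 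Indeed, for each coordinate $j$ the numbers $\underline{[h]([x])}_j$ and $\overline{[h]([x])}_j$ are the infimum and supremum of the continuous function $h_j$ over the compact interval $[x]$, hence attained at some $q_j, p_j \in [x]$; therefore $\overline{[h]([x])}_j - \underline{[h]([x])}_j = h_j(p_j) - h_j(q_j) \le \|h(p_j) - h(q_j)\|_\infty \le \ell\, \|p_j - q_j\|_\infty \le \ell\, \|\overline{x} - \underline{x}\|_\infty = \ell\, w([x])$, where the last inequality uses $p_j, q_j \in [x]$. Taking the maximum over $j$ gives the claim.

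For $[\Phi]$: on $[x]$ we have $\phi = f_0 - A(\cdot)$ by \eqref{eq:f}, so $\|\phi(x) - \phi(y)\| \le \|f_0(x) - f_0(y)\| + \|A(x - y)\| \le (L + \|A\|)\,\|x - y\|$, i.e.\ $\phi$ is Lipschitz with constant $L + \|A\|$. The linear part $A = A([x])$ depends on the interval, so I additionally invoke $\bar{A} \triangleq \sup_{[x] \subseteq \Omega} \|A([x])\| < \infty$; this holds for each decomposition strategy envisaged here — linearization of a differentiable $f_0$ about $\mid([x])$ gives $A = Df_0(\mid([x]))$ with $\|A\| \le L$, the Jacobian sign-stable decomposition of \cite{moh2022intervalACC} yields $\|A\|$ bounded by the (finite) Jacobian bounds on $\Omega$, and in the worst case one may simply take $A \equiv 0$. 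Choosing $[\Phi]([x])$ to be the interval hull of $\phi([x])$ and applying the observation with $\ell = L + \bar{A}$ gives $w([\Phi]([x])) \le (L + \bar{A})\, w([x])$, so $\tilde{L}_0 \triangleq L + \bar{A}$ works.

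For $[\Psi_i]$: by construction $[\Psi_i]([x]) = [g_i]([x]) \ominus \{s_i\}$ is a translate of $[g_i]([x])$, hence $w([\Psi_i]([x])) = w([g_i]([x]))$. Taking $[g_i]$ to be the interval hull of $g_i([x])$ and applying the observation with $h = g_i$ and $\ell = L_i$ yields $w([\Psi_i]([x])) \le L_i\, w([x])$, i.e.\ $\tilde{L}_i \triangleq L_i$. The only genuine subtlety is the uniform bound $\bar{A}$ on $\|A([x])\|$ over all subintervals of $\Omega$: for the concrete decompositions used in this paper it is immediate, but a fully general statement would need it as an explicit hypothesis on \eqref{eq:f}. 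If one instead wishes the width bound to hold for a prescribed inclusion function other than the interval hull, one must further assume that inclusion function is Lipschitz in the interval-analysis sense, which again holds for the standard natural, centered, and mixed-monotone constructions; this only changes $\tilde{L}_0$ and $\tilde{L}_i$ by the corresponding Lipschitz constants of those constructions.
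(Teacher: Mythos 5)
Your proof is correct, and it takes a more self-contained route than the paper: the paper's own proof is essentially a pointer to known constructions (mixed-monotone decomposition-based inclusion functions and the mean-value form) that are asserted to satisfy Lipschitz-type width bounds, whereas you exhibit an explicit inclusion function --- the interval hull of the exact image --- and derive the width bound coordinate-by-coordinate directly from Assumption~\ref{ass:lipschitz}. Your key observation (extrema of each component $h_j$ over the compact box are attained, and their difference is controlled by $\ell\,w([x])$) is sound, and the reduction of $[\Psi_i]$ to a translate of $[g_i]([x])$ is exactly right. You also surface a genuine subtlety the paper glosses over: since $A$ in \eqref{eq:f} depends on $[x]$, the Lipschitz constant of $\phi = f_0 - A(\cdot)$ is only uniform over subintervals of $\Omega$ if $\sup_{[x]\subseteq\Omega}\|A([x])\|$ is finite, and you correctly verify this for each decomposition strategy the paper mentions. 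The trade-off between the two arguments is computability versus self-containedness: the interval hull is the tightest inclusion function and makes the existence claim immediate, but it is generally not computable, whereas the paper's cited constructions are implementable at the cost of deferring the width bound to external references; your closing remark, that any Lipschitz (in the interval-analysis sense) inclusion function inherits the same bound with an adjusted constant, bridges the two and is the honest way to connect the existence statement to what Algorithm~\ref{alg:I} actually computes.
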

\begin{proof}
  We can, for example, use inclusion functions based on mixed-monotone
  decompositions (cf. \cite[Lemma 1]{khajenejad2020simultaneousCDC})
  or the mean-value form (cf. \cite{jaulinapplied} and
  \cite[Eq. (3)]{li2018invariant}) that can be shown to satisfy these
  bounds.
\end{proof}
With these decompositions in mind, we can define our approximations of
the forward reachable sets. Let
\begin{align*}
  \overline{P}([x]) \triangleq A[x] \oplus [\Phi]([x])
  \oplus S\mathcal{U} \oplus [\Psi]([x])\mathcal{U}
\end{align*}
be the polyhedral overapproximation of the one-step forward reachable
set, and let
\begin{align*}
  \overline{P}_u([x]) \triangleq A[x] \oplus [\Phi]([x])
  \oplus Su \oplus [\Psi]([x])\mathcal{U}
\end{align*}
be the polyhedral overapproximation of the fixed-control reachable
set. The expression $S\U$ denotes the polytopic image of $\U$ under
the linear transformation $S$, and $[\Psi]([x])\U$ denotes the
interval-valued product of two interval matrices that can be computed
using interval arithmetic.

\begin{lemma}\label{lem:widths}
  Let
  $\rho \triangleq \tilde{L}_0 + \max_{1\le i\le m}
  \tilde{L}_iw(\U_i)$. Then for all intervals $[x]\subset \Omega$, the
  polytopic approximations of the forward reachable sets satisfy
  \begin{align*}
    P_u([x]) &\subseteq \overline{P}_u([x]) \subseteq P_u([x]) \oplus \mathcal{B}_{\rho w([x])}
  \end{align*}
  and
  \begin{align*}
    P([x]) &\subseteq \overline{P}([x]) \subseteq P([x]) \oplus \mathcal{B}_{\rho w([x])}.
  \end{align*}
\end{lemma}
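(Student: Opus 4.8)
The plan is to handle the two chains of inclusions separately, since the fixed-control case is the building block. For the lower inclusions $P_u([x]) \subseteq \overline{P}_u([x])$ and $P([x]) \subseteq \overline{P}([x])$, I would argue directly from the decompositions \eqref{eq:f_over}, \eqref{eq:g_i} and the definition of $f$ in \eqref{eq:sys}. Take any $x = f(x_0,u)$ with $x_0 \in [x]$. Then $x = f_0(x_0) + \sum_i g_i(x_0) u_i \in A[x] \oplus [\Phi]([x]) \oplus \sum_i (\{s_i\} \oplus [\Psi_i]([x])) u_i$. Since each $u_i \in \U_i$ and $u_i \geq 0$ or not — here I need to be slightly careful: $[\Psi_i]([x])$ is centered at the origin, so $[\Psi_i]([x]) u_i \subseteq [\Psi_i]([x]) \U_i$ regardless of sign, because scaling a symmetric interval by any $u_i$ with $|u_i| \leq$ the relevant bound keeps it inside $[\Psi_i]([x])\U_i$ (more precisely, $[\Psi]([x])\U$ is the interval hull of all such products). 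Collecting terms, $x \in A[x]\oplus[\Phi]([x]) \oplus Su \oplus [\Psi]([x])\U = \overline{P}_u([x])$. Taking the union over $u \in \U$ gives $P([x]) \subseteq \overline{P}([x])$ using $\bigcup_u P_u([x]) = P([x])$ and $\bigcup_u \overline{P}_u([x]) = \overline{P}([x])$ (the latter because $\bigcup_u Su = S\U$).

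For the upper inclusions, the idea is that $\overline{P}_u([x])$ differs from $P_u([x])$ only by the "slack" introduced when replacing exact images by inclusion functions, and this slack is controlled by Proposition~\ref{prop:widths}. Concretely, I would show $\overline{P}_u([x]) \subseteq P_u([x]) \oplus \mathcal{B}_{\rho w([x])}$ by taking $y \in \overline{P}_u([x])$, so $y = Ax_0 + \phi_0 + Su + \psi$ with $x_0 \in [x]$, $\phi_0 \in [\Phi]([x])$, $\psi \in [\Psi]([x])u \subseteq [\Psi]([x])\U$. Pick the "true" point $z = f_0(x_0) + Su = Ax_0 + \phi(x_0) + Su \in P_u([x])$ (note $f(x_0,u) = f_0(x_0) + \sum g_i(x_0)u_i$, and $g_i(x_0) \in \{s_i\}\oplus[\Psi_i]([x])$; actually I should take $z = f(x_0, u)$ itself, which lies in $P_u([x])$, and then bound $\|y - z\|$). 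The difference $y - z$ is a sum of terms each lying in a symmetric interval of width bounded by $\tilde L_0 w([x])$ (for the $[\Phi]$ vs $\phi$ discrepancy) and $\tilde L_i w([x]) |u_i| \le \tilde L_i w([x]) w(\U_i)$ — here I need $\max$ over $i$ rather than a sum, which is why $\rho$ uses $\max_i$; this works because the error contributed by each $g_i$ term, evaluated at a single $x_0$, is at most $\tilde L_i w([x])$ per coordinate after centering, and the $\infty$-norm ball of radius $\rho w([x])$ with $\rho = \tilde L_0 + \max_i \tilde L_i w(\U_i)$ absorbs the total. So $\|y - z\|_\infty \le \rho w([x])$, giving $y \in z \oplus \mathcal{B}_{\rho w([x])} \subseteq P_u([x]) \oplus \mathcal{B}_{\rho w([x])}$. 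Taking unions over $u$ transfers this to the unrestricted case.

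The main obstacle I anticipate is the bookkeeping in the upper bound: getting the constant to come out as $\tilde L_0 + \max_i \tilde L_i w(\U_i)$ rather than $\tilde L_0 + \sum_i \tilde L_i w(\U_i)$. This requires observing that for a \emph{fixed} $x_0 \in [x]$, the discrepancy $g_i(x_0) - (s_i + \psi_i)$ for the chosen $\psi_i$ can be made zero (since $g_i(x_0) \in \{s_i\} \oplus [\Psi_i]([x])$ exactly), so the only genuine error is the $[\Phi]$ overapproximation of $\phi$ at that one point, of width $\le \tilde L_0 w([x])$; the $\max_i \tilde L_i w(\U_i)$ slack then accounts for the fact that in $\overline{P}_u$ we range $\psi$ over all of $[\Psi]([x])\U$ rather than over $[\Psi]([x])u$. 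I would make this precise by choosing, for each $y \in \overline{P}_u([x])$, a matching $x_0$ and $u$-dependent decomposition so that the unmatched part lies in a single scaled symmetric interval. The rest is routine: Minkowski-sum manipulations and the triangle inequality in the $\infty$-norm, together with $\mathcal{B}_a \oplus \mathcal{B}_b = \mathcal{B}_{a+b}$.
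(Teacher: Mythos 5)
Your overall route is the same as the paper's: the lower inclusions $P_u([x]) \subseteq \overline{P}_u([x])$ and $P([x]) \subseteq \overline{P}([x])$ follow by construction from \eqref{eq:f_over}--\eqref{eq:g_i}, and the upper inclusions are meant to follow from the width bounds of Proposition~\ref{prop:widths} together with elementary Minkowski-sum manipulations. Your treatment of the lower inclusions and of the $[\Phi]$-versus-$\phi$ discrepancy (contributing $\tilde L_0 w([x])$) is fine.

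The gap is exactly at the point you flagged, and your proposed fix does not close it when $m>1$. For $y = Ax_0 + \phi_0 + Su + \psi \in \overline{P}_u([x])$ and the matched point $z = f(x_0,u)$, the residual in the input channels is $\psi - \sum_{i=1}^m \hat g_i(x_0)u_i$, where \emph{both} terms range over $[\Psi]([x])\U = \sum_{i=1}^m [\Psi_i]([x])\U_i$. This set is a Minkowski sum of $m$ origin-centered intervals, so its diameter — which is what bounds the residual — is the \emph{sum} of the individual diameters, on the order of $\sum_{i=1}^m \tilde L_i w([x])\,\overline{u}_i$ with $\overline{u}_i = \max_{v\in\U_i}|v|$. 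Your claim that ``the unmatched part lies in a single scaled symmetric interval'' is only true for $m=1$; for $m\ge 2$ the per-channel slacks accumulate additively, and there is no cancellation available from choosing a different base point $x_0'\in[x]$, since $P_u([x])$ need not extend in the directions spanned by the $[\Psi_i]$ boxes. Consequently your argument, made rigorous, yields a radius of the form $\tilde L_0 w([x]) + c\sum_{i=1}^m \tilde L_i w(\U_i)\, w([x])$ (with $c$ depending on whether $\U_i$ is symmetric), not the $\max_{1\le i\le m}$ appearing in the stated $\rho$. Either justify why the sum can be replaced by the max in your setting (it cannot in general for $m\ge 2$ without further assumptions), or carry the sum through — the downstream results in Lemma~\ref{lem:Convergence} and Theorem~\ref{thm:RCIS} only use $\rho$ as a fixed constant, so the larger constant would be harmless, but as written the bound you assert is not what your argument proves.
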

\begin{proof}
  The inclusion $P_u([x]) \subseteq \overline{P}_u([x])$ is guaranteed
  by construction, since
  \begin{align*}
    P_u([x]) = A[x] \oplus \phi([x]) \oplus Su \oplus \sum_{i=1}^m
    \hat{g}_i([x]))u_i,
    \end{align*}
    $\phi([x]) \subseteq [\Phi]([x])$, and
    $\hat{g}_i([x]) \subseteq [\Psi_i]([x])$. The inclusion
    $\overline{P}_u([x]) \subseteq P_u([x]) \oplus \mathcal{B}_{\rho
      w([x])}$ follows from Proposition~\ref{prop:widths} and the
    definition of the Minkowski sum. The second statement is proved in
    the same way. 
  \end{proof}

\subsection{Interval Approximation of Pre-Sets}

We describe our main contribution next, which is a novel algorithm for
approximating the operator $I$ for systems of the form
\eqref{eq:sys}. Given a union of compact intervals $\Omega$, we
propose an iterative refinement procedure that approximates
$I(\Omega)$ by another union of compact intervals. Algorithm
\ref{alg:I} summarizes the main steps of this process, described in
detail next.

\begin{algorithm}
  \caption{$\underline{I}(\Omega)$}\label{alg:I}
  \begin{algorithmic}[1]
    \Require $\Omega$, $\varepsilon$
    \State $\queue \gets  \{\Omega\}$, $N \gets \emptyset$, $\underline{I} \gets \emptyset$, $\mathcal{E} \gets \emptyset$, $\U_I \gets \emptyset$
    \While{$\queue \neq \emptyset$}
      \State $[x] \gets \pop(\queue)$
      \State Compute $A$, $\Phi$, $s_i$, and $\Psi_i$ on $[x]$
      \If{$\overline{P}([x]) \cap \Omega = \emptyset$}
        \State $N \gets N \cup [x]$
      \ElsIf{$\exists u \in \U \text{ s.t. } \overline{P}_u([x]) \subseteq \Omega$} \label{line:u}
        \State $\underline{I} \gets \underline{I} \cup [x]$
	\State $\U_I \gets \U_I \cup ([x],S^\dagger(S\U([x])))$
      \ElsIf{$w([x]) \le \varepsilon$}
        \State $\mathcal{E} \gets \mathcal{E} \cup [x]$
      \Else
        \State $(l, r) \gets \bisect([x])$
        \State $\pushfront(\queue, l)$
        \State $\pushfront(\queue, r)$
      \EndIf
      \EndWhile
      \Return $\underline{I}$, $\U_I$
  \end{algorithmic}
\end{algorithm}
The algorithm is a loop that operates on a queue ($\queue$) of
intervals. An element of the queue is retrieved (and removed) from the
front of the queue using the $\pop$ operation, while an element is
added to the front of the queue using the $\pushfront$ operation. The
following steps are implemented until the queue is empty. Every
interval $[x]$ is checked to see whether it may be part of the pre-set
of $\Omega$. Two tests are performed:
\begin{enumerate}
\item Is the forward reachable set from $[x]$ disjoint with $\Omega$?
  If so, $[x]$ is disjoint from $I(\Omega)$.
\item Can an input $u$ be found so that the reachable set from $[x]$,
  restricted to the input $u$, lies entirely within $\Omega$? If so,
  $[x] \subset I(\Omega)$.
\end{enumerate}
If either condition is satisfied, then the set $[x]$ is saved in lists
labeled $\N$ (the collection of intervals disjoint with $I(\Omega)$)
and $\S$ (the collection of intervals contained in $I(\Omega)$).

If neither condition is satisfied and the $[x]$ is wider than the
specified tolerance, it is bisected along its largest dimension and
both resulting intervals are added to the front of the queue using the
$\pushfront$ operation. Otherwise, $[x]$ is added to $\mathcal{E}$,
which is a collection of the so-called ``indeterminate'' sets, which
are neither disjoint from nor subsets of $I(\Omega)$.

Line~\ref{line:u} of Algorithm~\ref{alg:I} requires checking the
condition
\begin{align}
  \label{eq:feas}
  \exists u \in \U \text{ s.t. } \overline{P}_u([x]) \subseteq \Omega,
\end{align}
which is a nonconvex feasibility problem, due to the nonconvexity of
$\Omega$. Luckily, we can exploit the structure of both
$\overline{P}_u([x])$ and $\Omega$ in order to efficiently and
precisely compute the set of $u$ that satisfy \eqref{eq:feas}.

Notice that, in the definition of $\overline{P}_u$, the term
$A[x] \oplus [\Phi]([x]) \oplus [\Psi]([x])\mathcal{U}$ is a convex
polyhedron, and the additive term $Su$ serves only to \emph{translate}
the resulting polyhedron. On the other hand, $\Omega$ is a union of
intervals, and also more generally a union of polyhedra. Therefore, we
can reduce the feasibility problem in \eqref{eq:feas} to a problem of
translating a polyhedron into a union of polyhedra. We describe here
an equivalence which helps solve this problem, inspired by
\cite{baker1986polygon}.

\begin{lemma}\label{lem:u}
  Let $\P = \conv(\Omega \cap \overline{P}([x]))$ and
  $\Q = \P \setminus (\Omega \cap \overline{P}([x]))$. Then, the
  following statements are equivalent:
  \begin{enumerate}
    \item $\exists u \in \U \text{ s.t. } \overline{P}_u([x]) \subseteq \Omega$;
    \item $S\U([x]) \triangleq \mathcal{I}(\overline{P}_0([x]), \P) \setminus \mathcal{O}(\overline{P}_0([x]), \Q)\ne \emptyset.$
  \end{enumerate}
\end{lemma}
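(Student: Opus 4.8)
The plan is to show the two statements are equivalent by rewriting the containment $\overline{P}_u([x]) \subseteq \Omega$ in a form that only involves the convex polytope $\P$ and the "hole" set $\Q$. First I would observe that, since $\overline{P}_u([x]) = \overline{P}_0([x]) \oplus \{Su\}$ by the very definition of $\overline{P}_u$ and $\overline{P}_0$ (the term $Su$ merely translates the convex polyhedron $\overline{P}_0([x])$), statement (1) asks exactly whether there is a translation vector $r = Su$ with $r \in S\U$ such that $\overline{P}_0([x]) \oplus \{r\} \subseteq \Omega$. The key reduction is that, because $\overline{P}_0([x]) \oplus \{Su\} = \overline{P}_u([x]) \subseteq \overline{P}([x])$ for every $u \in \U$, the containment in $\Omega$ is equivalent to containment in $\Omega \cap \overline{P}([x])$. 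So (1) holds iff there is $u \in \U$ with $\overline{P}_0([x]) \oplus \{Su\} \subseteq \Omega \cap \overline{P}([x])$.

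Next I would use the decomposition $\Omega \cap \overline{P}([x]) = \P \setminus \Q$, which holds by the definitions $\P = \conv(\Omega \cap \overline{P}([x]))$ and $\Q = \P \setminus (\Omega \cap \overline{P}([x]))$. The elementary set identity here is that for a set $T$ (the translated polytope $\overline{P}_0([x]) \oplus \{Su\}$) we have $T \subseteq \P \setminus \Q$ if and only if $T \subseteq \P$ and $T \cap \Q = \emptyset$. The first condition, $\overline{P}_0([x]) \oplus \{Su\} \subseteq \P$, is precisely $Su \in \mathcal{I}(\overline{P}_0([x]), \P)$ by the definition of the insertion set. The second condition, $(\overline{P}_0([x]) \oplus \{Su\}) \cap \Q = \emptyset$, is precisely $Su \notin \mathcal{O}(\overline{P}_0([x]), \Q)$ by the definition of the overlap set. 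Intersecting with the constraint $Su \in S\U$ (equivalently $u \in \U$), statement (1) becomes: the set $\{Su : u \in \U\} \cap \mathcal{I}(\overline{P}_0([x]), \P) \setminus \mathcal{O}(\overline{P}_0([x]), \Q)$ is nonempty. Writing $S\U([x])$ for this set (as in the notation introduced in Algorithm \ref{alg:I} and the lemma statement), this is exactly statement (2).

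A technical point that must be handled carefully: $\Q$ as defined is the set difference $\P \setminus (\Omega \cap \overline{P}([x]))$, which need not be closed or polyhedral, so the operators $\mathcal{I}$ and $\mathcal{O}$ from Propositions \ref{prop:into} and \ref{prop:touch} — which were stated for polytopes — do not literally apply to $\Q$. I would address this by noting that $\Omega$ is a finite union of compact intervals (polytopes) and $\overline{P}([x])$ is a polytope, so $\Omega \cap \overline{P}([x])$ is a finite union of polytopes $\bigcup_j \P_j$ with $\P = \conv\bigl(\bigcup_j \P_j\bigr)$; the containment condition can then be phrased as containment in $\P$ together with disjointness from each polyhedral piece of $\P \setminus \bigcup_j \P_j$ obtained from a polyhedral complex refinement, and $\mathcal{O}(\overline{P}_0([x]), \Q)$ is understood as the union of $\mathcal{O}$ over those pieces. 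The main obstacle is thus not the convex-geometry equivalence itself (which is a short exercise with the translation identities above) but making the decomposition $\Q = \P \setminus (\Omega \cap \overline{P}([x]))$ precise enough that Propositions \ref{prop:into} and \ref{prop:touch} can be invoked — i.e., justifying that "translate a convex polytope into $\P$ while avoiding the hole $\Q$" is a well-posed polyhedral computation. Once that bookkeeping is fixed, the two statements are equivalent by the chain of reductions above.
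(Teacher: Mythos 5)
Your proposal is correct and follows essentially the same route as the paper's proof: reduce containment in $\Omega$ to containment in $\Omega \cap \overline{P}([x])$, view $Su$ as a translation of $\overline{P}_0([x])$, and split the nonconvex target into the convex hull $\P$ minus the holes $\Q$ handled by $\mathcal{I}$ and $\mathcal{O}$. You are in fact somewhat more careful than the paper on two points it leaves implicit --- that $r\in\mathcal{I}(\overline{P}_0([x]),\P)$ already forces $r\in S\U$ (via $\P\subseteq\overline{P}([x])=\overline{P}_0([x])\oplus S\U$), and that $\Q$ must be interpreted as a finite union of polyhedral pieces for Propositions~\ref{prop:into} and~\ref{prop:touch} to apply --- so no gap.
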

\begin{proof}
  By defining
  $\overline{P}_0([x]) \triangleq A[x] \oplus [\Phi]([x]) \oplus
  [\Psi]([x])\mathcal{U}$, we obtain that for any $u$,
  $\overline{P}_u([x]) = \overline{P}_0([x]) \oplus
  \{Su\}$. Furthermore,
  $\overline{P}([x]) = \overline{P}_0([x]) \oplus S\U$.  This gives
  rise to the equivalance
  \begin{gather*}
    \exists u \in \U \text{ s.t. } \overline{P}_u([x]) \subseteq
    \Omega \\ \iff \exists r \in \real^n \text{ s.t. }
    \overline{P}_0([x]) \oplus \{r\} \subseteq \Omega \cap \overline{P}([x]),
  \end{gather*}
  where it must be true that $r = Su$. We see that by intersecting
  with $\overline{P}([x])$ on the right hand side, we can identify a
  translation by a vector $r$, which is automatically restricted to
  the range of $S$. To find the $r$ in the latter expression, we first
  find the translations into the \emph{convex hull} of
  $\Omega \cap \overline{P}([x])$ (i.e.,
  $\mathcal{I}(\overline{P}_0([x]), \P)$), then remove the
  translations that cause overlap with parts of the convex hull that
  are not in the original set (i.e.,
  $\mathcal{O}(\overline{P}_0([x]),\Q)$). This gives the set of
  translations $r$ that result in containment in
  $\Omega \cap \overline{P}([x])$, therefore yielding the expression
  for $S\U([x])$.
\end{proof}

Since $S\U([x])$ is the difference between a polytope and a union of
polytopes, it can be expressed as the union of a finite number of
polytopes. The equivalence in Lemma~\ref{lem:u} gives us a tractable
method of solving the feasibility problem in \eqref{eq:feas}, using
procedures to efficiently compute $\mathcal{I}$ and $\mathcal{O}$.
Finally, since $\forall r \in S\U([x]), \ \exists u \in \U$ such that
$r = Su$, we can recover the set of inputs with the Moore-Penrose
pseudoinverse, $\U([x]) = S^\dagger(S\U([x]))$, which is stored/saved
in $\U_I$ as pairs $([x],\U([x]) )$.

\begin{rem}
  \rm The computation of the set difference between two polyhedra can
  be represented as a union of polyhedra. This procedure is
  computationally expensive, especially when considering the
  difference between a polyhedron and a union of polyhedra, for which
  the best algorithms \cite{baotic2009polytope} have exponential
  complexity. In fact, the computation of set differences is the
  largest computaional burden of our method. Caching of results from
  previous iterations may reduce the number of differences that must
  be computed in the final implementation, although these details are
  not described here for the sake of brevity. \oprocend
\end{rem}

We finish this section by proving that Algorithm~1 returns a useful
approximation of $I(\Omega)$, which is within some known bound of the maximal CIS.
\begin{lemma}
  \label{lem:Convergence}
  Let $\Omega$ be a finite union of compact intervals. Then, for any
  precision $\varepsilon > 0$, Algorithm~\ref{alg:I} terminates in a
  finite number of iterations. Furthermore, letting
  $\underline{I}(\Omega)$ denote the output of Algorithm~\ref{alg:I},
  it holds that
  \begin{align}
    I(\Omega \ominus \mathcal{B}_{\rho\varepsilon}) \subseteq I_{\rho\varepsilon}(\Omega) \subseteq \underline{I}(\Omega) \subseteq I(\Omega).
  \end{align}
\end{lemma}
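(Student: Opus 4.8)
The plan is to establish the four inclusions separately, then prove finite termination. The middle inclusion $I_{\rho\varepsilon}(\Omega) \subseteq \underline{I}(\Omega)$ and the leftmost inclusion $I(\Omega \ominus \mathcal{B}_{\rho\varepsilon}) \subseteq I_{\rho\varepsilon}(\Omega)$ are the substantive ones; the others are comparatively routine. First I would dispatch $\underline{I}(\Omega) \subseteq I(\Omega)$: every interval $[x]$ added to $\underline{I}$ in Line~\ref{line:u} satisfies $\overline{P}_u([x]) \subseteq \Omega$ for some $u \in \U$, and by Lemma~\ref{lem:widths} we have $P_u([x]) \subseteq \overline{P}_u([x])$, so $f(x_0, u) \in \Omega$ for every $x_0 \in [x]$; since also $[x] \subseteq \Omega$ (all queue elements descend from $\Omega$ by bisection), we get $[x] \subseteq Q(\Omega) \cap \Omega = I(\Omega)$. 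Taking the union over all such $[x]$ gives the claim. For the leftmost inclusion $I(\Omega \ominus \mathcal{B}_{\rho\varepsilon}) \subseteq I_{\rho\varepsilon}(\Omega)$, I would simply unfold definitions: $I_{\rho\varepsilon}(\Omega) = Q(\Omega \ominus \mathcal{B}_{\rho\varepsilon}) \cap \Omega$, whereas $I(\Omega \ominus \mathcal{B}_{\rho\varepsilon}) = Q(\Omega \ominus \mathcal{B}_{\rho\varepsilon}) \cap (\Omega \ominus \mathcal{B}_{\rho\varepsilon})$, and since $\Omega \ominus \mathcal{B}_{\rho\varepsilon} \subseteq \Omega$ the inclusion is immediate.

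The key step is $I_{\rho\varepsilon}(\Omega) \subseteq \underline{I}(\Omega)$. Here I would argue pointwise: take $x_0 \in I_{\rho\varepsilon}(\Omega) = Q(\Omega \ominus \mathcal{B}_{\rho\varepsilon}) \cap \Omega$, so $x_0 \in \Omega$ and there is $u \in \U$ with $f(x_0, u) \in \Omega \ominus \mathcal{B}_{\rho\varepsilon}$. I need to show $x_0$ lies in some interval that the algorithm places in $\underline{I}$. The idea is that the algorithm only fails to classify $x_0$ correctly if it ends up in an indeterminate leaf $[x] \in \mathcal{E}$ with $w([x]) \le \varepsilon$ containing $x_0$; but on such an interval, Lemma~\ref{lem:widths} gives $\overline{P}_u([x]) \subseteq P_u([x]) \oplus \mathcal{B}_{\rho w([x])} \subseteq P_u([x]) \oplus \mathcal{B}_{\rho\varepsilon}$, and since $f(x_0, u) \in \Omega \ominus \mathcal{B}_{\rho\varepsilon}$ one would like to conclude $\overline{P}_u([x]) \subseteq \Omega$, contradicting the fact that $[x]$ was not caught by Line~\ref{line:u}. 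The subtlety is that $\overline{P}_u([x])$ must be contained in $\Omega$ \emph{as a whole}, not just the single image $f(x_0,u)$ — one must use that every $x' \in [x]$ maps under $u$ into $P_u([x])$, combined with the robustness margin $\rho\varepsilon$ absorbing both the overapproximation error and the spread of $P_u([x])$; here it is cleanest to observe $P_u([x]) \subseteq f(x_0,u) \oplus \mathcal{B}_{(L + \sum L_i w(\U_i))w([x])}$ and check this is dominated by $\rho w([x])$, or more directly to note $\overline{P}_u([x]) \subseteq f([x], u) \oplus \mathcal{B}_{\rho\varepsilon}$ is not quite what is needed and instead track that $\overline{P}_u([x]) \subseteq \overline{P}_u([x]')$-type relations let us pin the whole overapproximation near $f(x_0,u)$. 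In any case, the upshot is that every sufficiently small interval containing a point of $I_{\rho\varepsilon}(\Omega)$ is caught by the containment test, so it is never sent to $\mathcal{E}$, hence ends up in $\underline{I}$; taking the union over a cover of $I_{\rho\varepsilon}(\Omega)$ by such leaves finishes the argument. I expect reconciling ``one point maps deep inside'' with ``the whole polytopic overapproximation is contained'' to be the main obstacle, and the resolution hinges on the width bound $\rho$ in Lemma~\ref{lem:widths} being exactly the constant that makes both effects fit inside the margin $\rho\varepsilon$.

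Finally, for termination: the queue is processed by repeated bisection along the largest dimension, so after finitely many bisections any descendant interval has width at most $\varepsilon$, at which point it is routed to $N$, $\underline{I}$, or $\mathcal{E}$ and never re-enqueued. Since $\Omega$ is a finite union of compact intervals and each bisection produces two children, the bisection tree is finitely branching with all branches of bounded depth (the depth needed to reach width $\le \varepsilon$ is uniformly bounded because $\Omega$ is bounded), so by König's lemma it is finite; thus the loop executes finitely many iterations. This also confirms that $N$, $\underline{I}$, and $\mathcal{E}$ are finite unions of intervals, so $\underline{I}(\Omega)$ is a well-defined finite union of compact intervals as claimed. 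Combining the four inclusions established above yields the stated chain.
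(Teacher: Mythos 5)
Your treatment of termination, of $I(\Omega \ominus \mathcal{B}_{\rho\varepsilon}) \subseteq I_{\rho\varepsilon}(\Omega)$, and of $\underline{I}(\Omega) \subseteq I(\Omega)$ is sound (and already more detailed than the paper, whose ``proof'' is a one-line citation of the analogous lemma in the switched-systems reference). The problem is the middle inclusion $I_{\rho\varepsilon}(\Omega) \subseteq \underline{I}(\Omega)$: you correctly flag the difficulty, but the repair you sketch does not work. What the argument needs is this: if a leaf $[x] \in \mathcal{E}$ contains a point $x_0$ with $f(x_0,u^*) \in \Omega \ominus \mathcal{B}_{\rho\varepsilon}$, then $\overline{P}_{u^*}([x]) \subseteq \Omega$, contradicting the failure of the test on Line~\ref{line:u}. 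Since $f(x_0,u^*) \in P_{u^*}([x]) \subseteq \overline{P}_{u^*}([x])$, the clean way to get this is a bound on the \emph{diameter} of $\overline{P}_{u^*}([x])$ by $\rho\, w([x])$, which would pin the whole polytope inside $\{f(x_0,u^*)\} \oplus \mathcal{B}_{\rho\varepsilon} \subseteq \Omega$. But Lemma~\ref{lem:widths} gives only a one-sided Hausdorff-type bound $\overline{P}_u([x]) \subseteq P_u([x]) \oplus \mathcal{B}_{\rho w([x])}$, i.e., it controls the over-approximation error, not the spread of $P_u([x])$ itself. The constant $\rho = \tilde{L}_0 + \max_i \tilde{L}_i w(\U_i)$ is built from the width constants of the \emph{remainder} inclusions $[\Phi]$ and $[\Psi_i]$ and contains no contribution from $\|A\|$ (nor from the Lipschitz constant $L$ of $f_0$); meanwhile the diameter of $\overline{P}_u([x])$ contains the term $w(A[x]) \approx \|A\|\,w([x])$. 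When $A$ is the Jacobian at the midpoint, $\tilde{L}_0$ is a second-order quantity while $\|A\|$ is not, so your claimed domination ``$P_u([x]) \subseteq f(x_0,u) \oplus \mathcal{B}_{(L + \sum L_i w(\U_i))w([x])}$, dominated by $\rho w([x])$'' is false in general, and the fallback you gesture at (tracking ``$\overline{P}_u([x]) \subseteq \overline{P}_u([x]')$-type relations'') is not an argument.

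To close the gap you would need a genuine diameter bound $w(\overline{P}_u([x])) \le \rho'\, w([x])$ with $\rho'$ absorbing $\|A\|$ (and the magnitudes, not merely the widths, of the admissible inputs multiplying $[\Psi_i]([x])$), and then the lemma holds with $\rho'$ in place of $\rho$; this is exactly how the cited switched-systems lemma works, because there the constant bounds the width of the \emph{full} inclusion function $[f]([x])$ rather than a remainder. As stated, with the paper's $\rho$, the middle inclusion is unsubstantiated by your argument (and, I would argue, by the paper's). Two minor points: you should also dispose of the $N$ branch explicitly (immediate, since $P([x]) \subseteq \overline{P}([x])$ and $\overline{P}([x]) \cap \Omega = \emptyset$ forces $[x] \cap Q(\Omega) = \emptyset$), and note that distinct points of $I_{\rho\varepsilon}(\Omega)$ in the same leaf may require distinct inputs, so the argument must be run with the single $u^*$ witnessing one such point, as above.
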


\begin{proof}
  The proof is similar to \cite[Lemma 1]{li2018invariant}, relying on
  the bounds provided by Lemma~\ref{lem:widths}.
\end{proof}

\subsection{Near-Maximal Controlled Invariant Sets}
With the ability to compute $\underline{I}(\Omega)$ using
Algorithm~\ref{alg:I}, all that remains is to repeat this operation
until convergence is achieved. Algorithm~\ref{alg:I-infty} describes
this simple procedure, which is guaranteed to terminate in finite time.
\begin{algorithm}
  \caption{Approximation of $I^\infty(\Omega)$}\label{alg:I-infty}
  \begin{algorithmic}[1]
    \Require $\Omega$, $\varepsilon$
    \State $I_0 \gets \Omega$, $I_1 \gets \emptyset$
    \While{$I_0 \neq I_1$}
    \State $I_1 \gets I_0$
    \State $I_0 \gets \underline{I}(I_0)$ \Comment via Algorithm~\ref{alg:I}
  \EndWhile
  \Return $I_0$
  \end{algorithmic}
\end{algorithm}

\begin{thm}\label{thm:RCIS} For any finite
  union of compact intervals $\Omega$ and precision $\varepsilon > 0$,
  Algorithm~\ref{alg:I-infty} terminates in a finite number of
  iterations. Furthermore, denoting the output of the algorithm as
  $\underline{I}^\infty(\Omega)$, the following inclusions hold:
  \begin{align*}
    I_r^\infty(\Omega) \subseteq \underline{I}^\infty(\Omega) \subseteq I^\infty(\Omega),
  \end{align*}
  where $r = \rho\varepsilon$. Finally, $\underline{I}^\infty(\Omega)$ is controlled invariant.
\end{thm}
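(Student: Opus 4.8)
The plan is to derive the theorem from Lemma~\ref{lem:Convergence} by a monotonicity-plus-limit argument that parallels the classical result on $I^\infty$. First I would establish termination: each application of $\underline{I}$ acts on a finite union of compact intervals and, by Lemma~\ref{lem:Convergence}, returns a finite union of compact intervals contained in the previous one, so the sequence $I_0, I_1, I_2, \dots$ produced in Algorithm~\ref{alg:I-infty} is nonincreasing with respect to set inclusion. Since each $I_k$ is built from the bisection tree of the previous stage at fixed precision $\varepsilon$, the number of intervals and their combinatorial description is drawn from a structure that cannot strictly decrease forever while remaining nonempty and change at every step; more carefully, one argues that $\underline{I}$ is \emph{extensive-from-below} in the sense $\underline{I}(\Omega') \subseteq \Omega'$, and that once $\underline{I}(\Omega') = \Omega'$ the loop halts. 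Finite termination then follows because the chain $I_0 \supseteq I_1 \supseteq \cdots$ of finite unions of $\varepsilon$-bounded boxes must stabilize (the relevant finiteness comes from the fact that the refinement is capped at width $\varepsilon$, so only finitely many distinct box collections can arise along the monotone chain). I would spell this out using the monotonicity of $\underline{I}$ inherited from the monotonicity of $I$ together with the containment $\underline{I}(\Omega') \subseteq I(\Omega') \subseteq \Omega'$.

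Next I would prove the two inclusions. The upper inclusion $\underline{I}^\infty(\Omega) \subseteq I^\infty(\Omega)$ follows by induction on the iteration count from the rightmost inclusion $\underline{I}(\Omega') \subseteq I(\Omega')$ of Lemma~\ref{lem:Convergence} together with the monotonicity of $I$: if $I_k \subseteq I^k(\Omega)$, then $I_{k+1} = \underline{I}(I_k) \subseteq I(I_k) \subseteq I(I^k(\Omega)) = I^{k+1}(\Omega)$, and taking the limit (which is attained in finitely many steps on both sides up to the fixed point) gives the claim. For the lower inclusion $I_r^\infty(\Omega) \subseteq \underline{I}^\infty(\Omega)$ with $r = \rho\varepsilon$, I would use the left inclusion of Lemma~\ref{lem:Convergence}, namely $I_{\rho\varepsilon}(\Omega') \subseteq \underline{I}(\Omega')$, again combined with monotonicity: one shows by induction that $I_r^k(\Omega) \subseteq I_k$, using that $I_r$ is monotone and that $I_r(I_r^{k}(\Omega)) \subseteq I_r(I_k) \subseteq \underline{I}(I_k) = I_{k+1}$, where the middle step needs $I_r(\Omega') \subseteq I_{\rho\varepsilon}(\Omega') \subseteq \underline{I}(\Omega')$ applied at $\Omega' = I_k$. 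Passing to the fixed point yields $I_r^\infty(\Omega) \subseteq \underline{I}^\infty(\Omega)$.

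Finally, for controlled invariance of $\X \triangleq \underline{I}^\infty(\Omega)$, I would use that at termination $\underline{I}(\X) = \X$. By the construction in Algorithm~\ref{alg:I}, every box $[x]$ that ends up in the returned set $\underline{I}$ passed the test on Line~\ref{line:u}, i.e.\ there exists $u \in \U$ with $\overline{P}_u([x]) \subseteq \Omega$; since $P_u([x]) \subseteq \overline{P}_u([x])$ by Lemma~\ref{lem:widths}, this means every point $x_0 \in [x]$ satisfies $f(x_0, u) \in \Omega$. But the fixed-point condition says this $\Omega$ can be taken to be $\X$ itself (the set did not shrink under one more application of $\underline{I}$, so each such box's reachable set lies in $\X$). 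Hence for every $x_0 \in \X$ there is an input $u$ with $f(x_0,u) \in \X$, which is exactly controlled invariance; moreover the stored pairs in $\U_I$ give the corresponding invariance-enforcing input sets.

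The main obstacle I expect is making the finite-termination argument fully rigorous: monotonicity gives a nonincreasing chain, but one must argue it cannot be infinite and strictly decreasing. The clean way is to observe that because the precision $\varepsilon$ is fixed, each $\underline{I}$-output is a union of boxes whose vertices lie in a set determined by finitely many bisections of the (bounded) region $\Omega$, and a decreasing chain of such finite unions drawn from a fixed finitely-generated grid structure must stabilize — but getting the invariant right (what exactly is the finite poset the iterates live in, given that the bisection tree can in principle differ between stages) requires care, and I would likely stabilize it by noting each $I_{k+1}$ is a sub-union of boxes each of which is contained in some box of $I_k$, bounding the descent.
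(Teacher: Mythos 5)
Your proposal is correct and follows essentially the same route as the paper: termination via the strictly decreasing chain of finite unions of $\varepsilon$-resolution boxes, the two inclusions via induction combining the sandwich $I_{\rho\varepsilon}(\cdot) \subseteq \underline{I}(\cdot) \subseteq I(\cdot)$ from Lemma~\ref{lem:Convergence} with monotonicity of $I_r$, $\underline{I}$, and $I$, and invariance from the fixed-point identity $\underline{I}^\infty(\Omega) = \underline{I}(\underline{I}^\infty(\Omega)) \subseteq I(\underline{I}^\infty(\Omega))$ (which you unpack through Line~\ref{line:u} of Algorithm~\ref{alg:I} rather than abstractly, but to the same effect). The termination step you flag as delicate is treated just as tersely in the paper, so no gap relative to the published argument.
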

Note that it is possible for the algorithm to return an empty set only
if the system does not admit an $r$-robustly controlled invariant set,
meaning $I_r^\infty = \emptyset$.
\begin{proof}
  Let $\underline{I}^k(\Omega)$ denote the value of
  $\underline{I}(\underline{I}^{k-1}(\Omega))$ in the $k$-th
  iteration, with $\underline{I}^0(\Omega) = \Omega$. Since the
  algorithm terminates if
  $\underline{I}^{k-1}(\Omega) = \underline{I}^k(\Omega)$, we only
  need to consider the case when they are not equal.  In this case,
  the structure of Algorithm~\ref{alg:I} is such that
  $\underline{I}^{k}(\Omega) \subsetneq \underline{I}^{k-1}(\Omega)$.
  Since $\Omega$ is compact, $\underline{I}^k(\Omega)$ contains only a
  finite number of intervals which will be considered using the
  bisection method for a given $\epsilon$.  Then $\exists K \ge 0$
  such that $\underline{I}^k(\Omega) = \emptyset, \ \forall k \ge K$,
  meaning the algorithm terminates in finite time.

  To prove the inclusions, note that by Lemma~\ref{lem:Convergence},
  $I_r(\Omega) \subseteq \underline{I}(\Omega) \subseteq
  I(\Omega)$. Also, for any two sets
  $A \subseteq B \subseteq \real^n$, we know that
  $\underline{I}(A) \subseteq \underline{I}(B)$ and
  $I_r(A) \subseteq I_r(B)$. Therefore by applying
  Lemma~\ref{lem:Convergence} and induction, we can determine that
  $I_r^k(\Omega) \subseteq \underline{I}^k(\Omega) \subseteq
  I^k(\Omega)$ for all $k \ge 0$.

  Finally, to prove invariance of $\underline{I}^\infty(\Omega)$, note
  that by Lemma~\ref{lem:Convergence},
  $\underline{I}^\infty(\Omega) =
  \underline{I}(\underline{I}^\infty(\Omega)) \subseteq
  I(\underline{I}^\infty(\Omega))$.
\end{proof}

\begin{rem}
  [An Outside-In Approach] \label{rem:Outside_In}\rm The method
  described in this paper is commonly known as an \emph{outside-in}
  method, since we start with a region of interest $\Omega$ and search
  for the largest invariant set $\mathcal{X}=I^\infty(\Omega)$ which it
  contains. It is also possible to use an \emph{inside-out} approach,
  which starts with a known invariant set, and iteratively grows that
  set \cite{bravo2005invariant}. Our method can be easily adapted for
  this case, as well. \oprocend
\end{rem}
To conclude this section, we state a result that has the potential to
significantly reduce the number of iterations required by
Algorithm~\ref{alg:I-infty}, hence, accelerating convergence.  Let
$\{\Omega_i\}_{i=1}^N$ be any \emph{ordered} collection of sets
such that $\Omega = \bigcup_{i=1}^N \Omega_i$.  Define the operator
\begin{align*}
  I_i(\Omega) \triangleq \bigcup_{j \ne i} \Omega_j \cup \left(Q(\Omega) \cap \Omega_i\right),
\end{align*}
and let $I_{1:N} \triangleq I^N(\cdots (I^1(\Omega))$. We let the
$I_i$ operator preserve the order of the constituent subsets by
defining
\begin{align*}
  (I_i(\Omega))_i = Q(\Omega)\cap\Omega_i
  \quad \text{and} \quad
  (I_i(\Omega))_j = \Omega_j, \ j \ne i.
\end{align*}
\begin{thm}\label{thm:partial}
  For a closed set $\Omega \subseteq \real^n$, $I_{1:N}$ satisfies
  \begin{gather}
  I_{1:N}(\Omega) \subseteq I(\Omega), \text{ and } \nonumber
  \\ \lim_{k\to\infty} (I_{1:N})^k(\Omega) = \lim_{k\to\infty}
  I^k(\Omega) = I^\infty(\Omega).\label{eq:convergence}
  \end{gather}
\end{thm}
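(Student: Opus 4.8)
The plan is to reduce both assertions to a single elementary fact — monotonicity of the pre-set operator, namely $A \subseteq B \Rightarrow Q(A) \subseteq Q(B)$, which is immediate from the definition of $Q$ — together with careful bookkeeping of how the ordered decomposition $\{\Omega_i\}_{i=1}^N$ propagates through the composition $I_{1:N} = I_N \circ \cdots \circ I_1$ (applying $I_1$ first, then $I_2$, and so on). Note that each $I_i$ only shrinks its argument, $I_i(\Omega) \subseteq \Omega$, and its output again carries an ordered $N$-part decomposition, so $I_{1:N}$ can be iterated. I would also first record that $Q$ maps closed sets to closed sets: if $x_k \to x$ with $u_k \in \U$ and $f(x_k, u_k) \in \Omega$, then compactness of $\U$ and continuity of $f$ give a subsequential limit $u$ with $f(x,u) \in \Omega$. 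Since finite unions and intersections preserve closedness, every set produced by iterating $I$ or $I_{1:N}$ from the closed set $\Omega$ is closed, so the cited special case of \cite[Proposition~4]{bertsekas1972invariant} applies and $\lim_{k\to\infty} I^k(\Omega) = I^\infty(\Omega)$ is the maximal controlled invariant set contained in $\Omega$.

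\emph{First inclusion $I_{1:N}(\Omega) \subseteq I(\Omega)$.} Write $\Omega^{(0)} = \Omega$ and $\Omega^{(i)} = I_i(\Omega^{(i-1)})$, so that $I_{1:N}(\Omega) = \Omega^{(N)} \subseteq \cdots \subseteq \Omega^{(0)} = \Omega$. Since $I_j$ leaves the $i$-th component untouched whenever $j \ne i$, an easy induction over the stages shows that the $i$-th component of $\Omega^{(N)}$ equals $Q(\Omega^{(i-1)}) \cap \Omega_i$. Because $\Omega^{(i-1)} \subseteq \Omega$, monotonicity of $Q$ gives $Q(\Omega^{(i-1)}) \subseteq Q(\Omega)$, so this component is contained in $Q(\Omega) \cap \Omega_i$; taking the union over $i$ yields $\Omega^{(N)} \subseteq Q(\Omega) \cap \Omega = I(\Omega)$.

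\emph{Convergence.} Put $U_k = (I_{1:N})^k(\Omega)$ and $V_k = I^k(\Omega)$; both are decreasing sequences of closed sets, hence their limits are the intersections $\bigcap_k U_k$ and $\bigcap_k V_k = I^\infty(\Omega)$. For the inclusion $\bigcap_k U_k \subseteq I^\infty(\Omega)$, induct on $k$ (the base case is trivial): if $U_k \subseteq V_k$, then the first inclusion applied to $U_k$ gives $U_{k+1} \subseteq I(U_k)$, and monotonicity of $I$ — a consequence of monotonicity of $Q$ — gives $I(U_k) \subseteq I(V_k) = V_{k+1}$. For the reverse inclusion, I would prove the key lemma: \emph{if $X$ is controlled invariant and $X \subseteq \Omega'$ for a set $\Omega'$ equipped with an ordered decomposition, then $X \subseteq I_{1:N}(\Omega')$}. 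This follows by running through the $N$ stages one at a time: at each stage $X$ is contained in the current set, so $X \subseteq Q(X) \subseteq Q(\text{current set})$ by monotonicity, and therefore replacing a single component $\Omega'_i$ of the current set by $Q(\cdot) \cap \Omega'_i$ cannot remove any point of $X$. Applying this lemma with $X = I^\infty(\Omega)$ — which is controlled invariant and contained in $\Omega = U_0$ — and iterating gives $I^\infty(\Omega) \subseteq U_k$ for all $k$, hence $I^\infty(\Omega) \subseteq \bigcap_k U_k$. The two inclusions together give $\lim_k (I_{1:N})^k(\Omega) = I^\infty(\Omega)$; combined with $\lim_{k\to\infty} I^k(\Omega) = I^\infty(\Omega)$ from the cited lemma, this establishes \eqref{eq:convergence}.

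I expect the main obstacle to be the bookkeeping rather than any deep idea: one must track precisely how the ordered decomposition moves through $I_N \circ \cdots \circ I_1$ — in particular the identity that the $i$-th component of $I_{1:N}(\Omega)$ is $Q(\Omega^{(i-1)}) \cap \Omega_i$ — and must state the monotonicity and invariance arguments at the level of decomposed sets so that the inductions on $k$ in the convergence part are legitimate. Preservation of closedness along the iteration is a minor but necessary point for invoking the cited result; everything else reduces to the single monotonicity property of $Q$.
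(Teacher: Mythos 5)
Your proposal is correct and follows essentially the same route as the paper's proof: the first inclusion via monotonicity of $Q$ applied componentwise through the stages (the paper phrases this as a pairwise identity for $I_i \circ I_j$ applied recursively, while you track the $i$-th component directly, which is slightly cleaner bookkeeping), and the convergence via the sandwich $I^\infty(\Omega) \subseteq (I_{1:N})^k(\Omega) \subseteq I^k(\Omega)$ using $X \subseteq Q(X)$ for the controlled invariant limit set. Your explicit remark that $Q$ preserves closedness (via compactness of $\U$ and continuity of $f$) is a small point the paper leaves implicit, but the argument is the same.
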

\begin{proof}
  By definition, for any $i \ne j$,
 \begin{align*}
    I_i(I_j(\Omega)) &= \bigcup_{\ell \ne i} I_j(\Omega)_\ell \cup
    \left(Q(I_j(\Omega)) \cap I_j(\Omega)_i\right) \\ &= \hspace{-0.1cm}\bigcup_{\ell \ne
      i, \ell \ne j} \hspace{-0.15cm} \Omega_\ell\,  \cup \left(Q(\Omega) \cap \Omega_j\right) \cup \left(Q(I_j(\Omega)) \cap
      I_j(\Omega)_i\right)
    \\ &\subseteq \bigcup_{\ell \ne i, \ell \ne j} \Omega_\ell \cup
    \left(Q(\Omega) \cap (\Omega_i \cup \Omega_j)\right),
  \end{align*}
  since $I_j(\Omega)_i \subseteq \Omega_i$ and
  $Q(I_j(\Omega)) \cap I_j(\Omega)_i \subseteq Q(\Omega) \cap
  \Omega_i$.  By applying this statement recursively, we arrive at
  \begin{align}\label{eq:subset}
    I_{1:N}(\Omega)
    \subseteq \bigcup_{i = 1}^N \Omega_i \cap Q(\Omega)
    = I(\Omega).
  \end{align}
  In \eqref{eq:convergence}, since $I_{1:N}$ is monotonically
  decreasing and closed, the limit exists.  We will show by induction
  that for all $k \ge 0$,
  $(I_{1:N})^k(\Omega) \supseteq I^\infty(\Omega)$. Consider a point
  $x \in I^\infty(\Omega)$. We know
  $x \in Q(I^\infty(\Omega)) \subseteq Q(\Omega)$. Therefore, by the
  definition of $I_{1}$, $x \in I_{1}(\Omega)$, meaning
  $I_{1}(\Omega) \supseteq I^\infty(\Omega)$. By repeating this
  reasoning for $I_2, \dots, I_N$, we can see that
  $I_{1:N}(\Omega) \supseteq I^\infty(\Omega)$. For the step case,
  assume that $(I_{1:N})^{k-1}(\Omega) \supseteq
  I^\infty(\Omega)$. Then by the same argument as the base case, for
  any $x \in I^\infty(\Omega)$,
  $x \in Q(I^\infty(\Omega)) \subseteq Q((I_{1:N})^{k-1}(\Omega))$,
  implying $(I_{1:N})^k(\Omega) \supseteq I^\infty(\Omega)$. Using
  this fact in combination with \eqref{eq:subset}, monotonicity of $I$
  and $I_{1:N}$, and an induction argument proves
  \eqref{eq:convergence}.
\end{proof}
Theorem~\ref{thm:partial} is useful because it allows us to
independently consider individual regions of the invariant set, rather
than approximating the $I$ operator all at once. Essentially, our
procedure can focus on $I_i(\Omega)$ without needing to compute all of
$Q(\Omega)$, which is more expensive. Using the $I_i$ operator often
leads to a reduction in the total number of iterations required.
Algorithm~\ref{alg:partial} describes the modified procedure that
takes advantage of this fact. In the algorithm, $\Omega$ is updated
whenever an interval is determined to not be a part of the invariant
set. The loop terminates if every interval in the queue has been
checked since $\Omega$ was last changed, meaning every interval is
contained in the invariant set. The function $\checked$ tracks whether
a given $[x]$ has been tested against the current $\Omega$. It
defaults to $0$, and is assigned a value of $1$ after $[x]$ is
processed. Note that $\pushback$ places an item in the back of the
queue.
\begin{algorithm}[h]
  \caption{Accelerated Approximation of $I^\infty(\Omega)$}\label{alg:partial}
  \begin{algorithmic}[1]
    \Require $\Omega$, $\varepsilon$
    \State $\queue \gets  \{\Omega\}$, $N \gets \emptyset$, $\underline{I} \gets \emptyset$, $\mathcal{E} \gets \emptyset$, $\U_I \gets \emptyset$
    \While{$\exists [x] \in \queue$, $\checked([x], \Omega) = 0$}
      \State $[x] \gets \pop(\queue)$
      \State Compute $A$, $\Phi$, $s_i$, and $\Psi_i$ on $[x]$
      \If{$\overline{P}([x]) \cap \Omega = \emptyset$}
        \State $N \gets N \cup [x]$
        \State $\Omega \gets \Omega \setminus [x]$
      \ElsIf{$\exists u \in \U \text{ s.t. } \overline{P}_u([x]) \subseteq \Omega$} \label{line:u2}
        \State $\checked([x], \Omega) \gets 1$
        \State $\pushback(\queue, [x])$
        \State $\U_I \gets \U_I \cup ([x],S^\dagger(S\U([x])))$
      \ElsIf{$w([x]) \le \varepsilon$}
        \State $\mathcal{E} \gets \mathcal{E} \cup [x]$
        \State $\Omega \gets \Omega \setminus [x]$
      \Else
        \State $(l, r) \gets \bisect([x])$
        \State $\pushfront(\queue, l)$
        \State $\pushfront(\queue, r)$
      \EndIf
    \EndWhile
    \State $\underline{I}^\infty \gets \bigcup_{[x]\in \queue} [x]$
    \Return $\underline{I}^\infty$, $\U_I$
  \end{algorithmic}
\end{algorithm}

\begin{lemma}
  For any finite union of compact intervals $\Omega$ and precision
  $\varepsilon > 0$, Algorithm~\ref{alg:partial} returns the same
  result as Algorithm~\ref{alg:I-infty}. Furthermore, the number of
  iterations required by Algorithm~\ref{alg:partial} is less than or
  equal to the number required by Algorithm~\ref{alg:I-infty}.
\end{lemma}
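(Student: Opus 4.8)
The plan is to prove the two assertions in turn: that Algorithm~\ref{alg:partial} produces exactly $\underline{I}^\infty(\Omega)$, the output of Algorithm~\ref{alg:I-infty}, and that it does so with no more main-loop iterations. For the first assertion I would begin by recording a loop invariant for Algorithm~\ref{alg:partial}: writing $\Omega$ for the current region and $\queue$ for the current queue, after every iteration one has $\Omega=\bigcup_{[x]\in\queue}[x]$, and this set is monotonically non-increasing. This holds because bisection replaces $[x]$ by two intervals with the same union, the two removal branches delete $[x]$ from both $\queue$ and $\Omega$, and the subset-test branch alters neither as a set. Since $\checked(\cdot,\Omega)$ reverts to its default value $0$ as soon as $\Omega$ changes, the loop terminates only when every $[x]\in\queue$ has been certified \emph{against the terminal region} $\Omega_\infty:=\bigcup_{[x]\in\queue}[x]$ to satisfy $\exists u\in\U:\ \overline{P}_u([x])\subseteq\Omega_\infty$. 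In other words, Algorithm~\ref{alg:partial} is a particular interleaving (in the spirit of Theorem~\ref{thm:partial}) of single-interval refinements of $\underline{I}$, and at termination running Algorithm~\ref{alg:I} on $\Omega_\infty$ would place each of these intervals directly into $\underline{I}$, so $\Omega_\infty$ is a fixed point of $\underline{I}$ with $\Omega_\infty\subseteq\Omega$.

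It then remains to identify this fixed point with $\underline{I}^\infty(\Omega)$. One inclusion is immediate from monotonicity of $\underline{I}$ (used already in the proof of Theorem~\ref{thm:RCIS}): from $\underline{I}(\Omega_\infty)=\Omega_\infty\subseteq\Omega$ one gets $\Omega_\infty\subseteq\underline{I}^k(\Omega)$ for every $k$ by induction, hence $\Omega_\infty\subseteq\underline{I}^\infty(\Omega)$. For the reverse inclusion I would carry the additional invariant $\bigcup_{[x]\in\queue}[x]\supseteq\underline{I}^\infty(\Omega)$ through the run of Algorithm~\ref{alg:partial}: it holds initially since $\underline{I}^\infty(\Omega)\subseteq\Omega$, and is preserved under bisection and the subset-test branch trivially. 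For the disjointness branch, if $[x]$ is discarded because $\overline{P}([x])\cap\Omega=\emptyset$ while $\Omega\supseteq\underline{I}^\infty(\Omega)$, then no $y\in[x]$ can lie in $\underline{I}^\infty(\Omega)$: since $\underline{I}^\infty(\Omega)$ is controlled invariant (Theorem~\ref{thm:RCIS}), there would be $u$ with $f(y,u)\in\underline{I}^\infty(\Omega)\subseteq\Omega$ and also $f(y,u)\in P([x])\subseteq\overline{P}([x])$, contradicting the emptiness of the intersection. The analogous statement for the small-width (``indeterminate'') branch reduces, using Lemma~\ref{lem:widths} and monotonicity of the subset test in $\Omega$, to the fact that $\underline{I}^\infty(\Omega)$ also excludes intervals that Algorithm~\ref{alg:I} would label indeterminate; this is the step I expect to require the most care, since it is entangled with the possibility that the two algorithms subdivide a given region differently. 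I would handle it by comparing \emph{unions of removed points} rather than partitions, exploiting that both algorithms apply the identical tests to their intervals.

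For the iteration count I would argue by simulation. The essential point is that Algorithm~\ref{alg:I-infty} runs Algorithm~\ref{alg:I} from scratch in each outer step and therefore re-pops every surviving interval at every outer step, whereas Algorithm~\ref{alg:partial} re-pops a previously verified interval only after $\Omega$ has changed (the $\checked$ flag). Concretely, I would construct an injection from the main-loop iterations of Algorithm~\ref{alg:partial} into those of Algorithm~\ref{alg:I-infty}: a pop that bisects or discards $[x]$ is matched to the first iteration of Algorithm~\ref{alg:I-infty} in which the corresponding interval is subdivided or discarded against a region containing the one used by Algorithm~\ref{alg:partial}, while a pop that merely re-certifies $[x]$ against the current $\Omega$ is matched to the re-examination of $[x]$ in the outer step of Algorithm~\ref{alg:I-infty} following the corresponding change of region. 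Verifying that this map is well-defined and injective is the bulk of the work, and the main obstacle is again that the adaptive bisection makes the two explorations non-identical, so the matching must be phrased in terms of ancestry in the bisection tree and of the monotone chain $\Omega=\Omega^{(0)}\supseteq\Omega^{(1)}\supseteq\cdots\supseteq\Omega_\infty$ of successive regions, rather than as a literal step-by-step correspondence.
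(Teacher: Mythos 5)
Your overall strategy is sound and is in fact considerably more explicit than the paper's own proof, which simply defers the first claim to ``the same reasoning as Theorem~\ref{thm:RCIS}'' and the second to Theorem~\ref{thm:partial} plus ``direct comparison of the algorithms.'' Your loop invariant $\Omega=\bigcup_{[x]\in\queue}[x]$, the observation that the terminal region $\Omega_\infty$ is a fixed point of $\underline{I}$, the inclusion $\Omega_\infty\subseteq\underline{I}^\infty(\Omega)$ via monotonicity, and the disjointness-branch argument are all correct and would survive scrutiny.

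However, the two steps you yourself flag as ``requiring the most care'' are precisely the crux of the lemma, and they are left unproven, so as written this is a plan rather than a proof. For the reverse inclusion $\underline{I}^\infty(\Omega)\subseteq\Omega_\infty$, the small-width branch is genuinely problematic: an $\varepsilon$-interval $[x]$ discarded by Algorithm~\ref{alg:partial} against an intermediate region $\Omega^{(t)}$ may be tested by Algorithm~\ref{alg:I-infty} against a different (larger) intermediate region and may there pass the $u$-test; moreover, because $A$, $[\Phi]$, $S$, $[\Psi]$ are recomputed per interval, $\overline{P}_u([x])\subseteq\overline{P}_u([y])$ need not hold for $[x]\subseteq[y]$, so the containment test is not automatically monotone under refinement. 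Without an added monotonicity hypothesis on the inclusion functions (or a weaker reading of ``same result'' as ``satisfies the same sandwich $I^\infty_r(\Omega)\subseteq\cdot\subseteq I^\infty(\Omega)$,'' which is what the paper's appeal to Theorem~\ref{thm:RCIS} actually delivers and which the fixed-point half of your argument already gives), exact set equality does not follow from what you have written. Similarly, the injection of iterations for the second claim is asserted but not constructed; the matching ``in terms of ancestry in the bisection tree'' is the right idea, but well-definedness hinges on the same unproven monotonicity (an interval bisected by Algorithm~\ref{alg:partial} must eventually be bisected or discarded by Algorithm~\ref{alg:I-infty} as well). You should either prove these two steps or explicitly weaken the conclusion to the sandwich guarantee, which your argument does establish.
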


\begin{proof}
  The first statement can be proved by the same reasoning as
  Theorem~\ref{thm:RCIS}. The second statement arises from the
  inclusion $I_{1:N}(\Omega) \subseteq I(\Omega)$
  (cf. Theorem~\ref{thm:partial}) and can also be seen by direct
  comparison of the algorithms.
\end{proof}

\section{Examples and Comparisons}
In this section, we demonstrate the effectiveness of our approach on a
numerical example, i.e., an inverted pendulum on a cart. We also compare our
approach to the method for switched systems in \cite{li2018invariant},
where the input space is sampled/gridded and considered as controlled
modes.

As in \cite{li2018invariant}, we consider an inverted pendulum on a
cart, discretized using forward Euler with a sampling time of
$0.01 \text{s}$. The dynamics are
\begin{align*}
  \dot{x}_1 &= x_2, \\
  \dot{x}_2 &= \frac{mgl}{J}\sin(x_1) - \frac{b}{J}x_2 + \frac{l}{J}\cos(x_1)u,
\end{align*}
with parameters $m = 0.2$ kg, $g = 9.8$ $\text{m}/\text{s}^2$ ,
$l = 0.3$ $\text{m}$, $J = 0.006$ $\text{kg}\cdot \text{m}^2$, and
$b = 0.1$ $\text{N}/\text{m}/\text{s}$.  This system and its
discretization are control affine. We consider a region of interest
$\Omega = [-0.05, -0.05]\times[-0.01, 0.01]$ and an input set
$\U = [-0.1, 0.1]$.

\begin{figure}[h]
  \centering
  \includegraphics[width=\columnwidth]{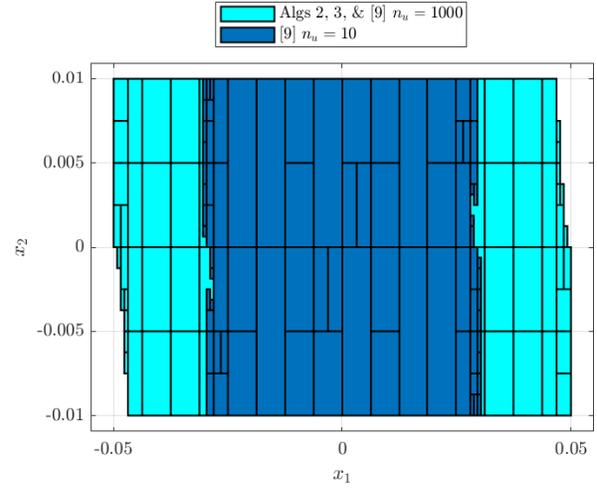}
  \caption{Controlled invariant sets of the inverted pendulum system.}
  \label{fig:x}
\end{figure}
\begin{figure}[h]
  \centering
  \includegraphics[width=\columnwidth]{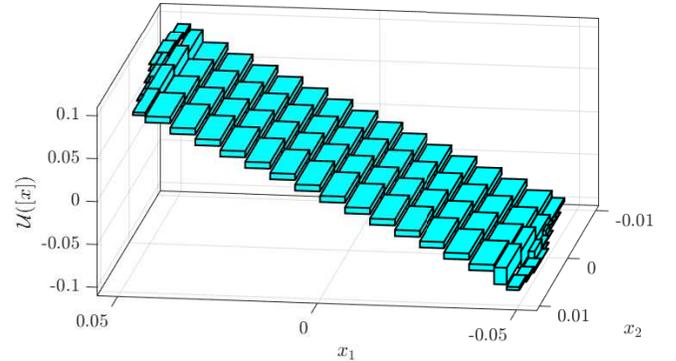}
  \caption{Union of invariance-enforcing inputs $\U([x])$ identified by
    Lemma~\ref{lem:u}.}
  \label{fig:u}
\end{figure}
Figure~\ref{fig:x} shows the identified controlled invariant sets for
our approaches, i.e., using Algorithms 2 and 3, and that of
\cite{li2018invariant}, with $n_u=10$ and $n_u=1000$ sampled
inputs. All methods were run with a precision $\varepsilon =
0.001$. On the other hand, Figure~\ref{fig:u} shows the union of all
invariance-enforcing control inputs $\U([x])$ identified by Algorithms
2 and 3. Finally, Table~\ref{tab:comp} shows a comparison of
computation times with different parameters.

\begin{table}[H]
  \centering
  \begin{tabular}[H]{c|c|c|c|c}
    Method & $\varepsilon$ & Iterations & Time (s) & Volume \\\hline \hline
    Algorithm~\ref{alg:I-infty} & $10^{-3}$ & 703 &  0.54 & 97.9\% \\
    Algorithm~\ref{alg:partial} & $10^{-3}$ & 618 & 0.63 & 97.9\% \\
    \cite{li2018invariant} ($n_u = 10$)  & $10^{-3}$ & 10729 & 0.12 & 59.8\% \\
    \cite{li2018invariant} ($n_u = 1000$)  & $10^{-3}$ & 485 & 0.59 & 97.9\% \\
  \end{tabular}
  \caption{Computational comparison between
    Algorithm~\ref{alg:I-infty}, Algorithm~\ref{alg:partial}, and
    \cite{li2018invariant}. $n_u$ denotes the number of input samples
    taken uniformly across $\U$. The volume is given in \% of the
    original $\Omega$.}
  \label{tab:comp}
\end{table}
Evidently, our method is able to identify a larger CIS in fewer
iterations than the sampling and interval arithmetic based approach in
\cite{li2018invariant}, when the number of samples is small. This is
presumably due to the higher accuracy of our polytopic approximations,
and the fact that we consider the entire continuous range of control
inputs. Increasing the number of sampled inputs results in a better
approximation of the CIS, at the cost of some additional computation
time.

\balance
\section{Conclusion}
We proposed two methods for approximating controlled invariant sets of
nonlinear control-affine systems using an iterative refinement
approach. We used techniques from computational geometry involving
translations of polyhedra to allow us to efficiently compute
continuous sets of feasible control inputs, rather than using a
sampling approach with switched dynamics. We demonstrated the
effectiveness of our method on a numerical example, which
showed improved accuracy over existing methods and led to faster
convergence in some cases. In the future, we will further explore the
extension of our approach to continuous time, as well as the control
synthesis problem, including some notions of optimality, while also
investigating ways to improve the accuracy and efficiency of our
algorithms. We also will test our approaches on a wide variety of
nonlinear systems.

\appendix
\subsection{Proof of Proposition~\ref{prop:touch}}\label{sec:prop-proof}
We begin by stating two intermediate results which will be used to
prove the proposition. The first allows us to determine whether two
polytopes intersect by examining the hyperplanes defining each
polytope.
\begin{prop}\label{prop:hyper}
  Given two polytopes $\P$ and $\Q$, $\P \cap \Q \ne \emptyset$ if and
  only if both of the following statements are true
  \begin{enumerate}
  \item $\P$ intersects every halfspace defining $\Q$, i.e.,
    $\forall i \in \until{N_\Q}, \ \exists p_i \in \P$ such that
    $(H_\Q)_i p_i \le (b_\Q)_i$.
  \item $\Q$ intersects every halfspace defining $\P$, i.e.,
    $\forall i \in \until{N_\P}, \ \exists q_i \in \Q$ such that
    $(H_\P)_i q_i \le (b_\P)_i$.
  \end{enumerate}
\end{prop}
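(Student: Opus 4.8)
The plan is to handle the two implications separately; the forward one is immediate and all the content lies in the converse. If $x\in\P\cap\Q$, then $x$ satisfies every constraint defining $\Q$, so taking $p_i=x$ witnesses statement~(1) for every $i$, and symmetrically $q_i=x$ witnesses statement~(2). Hence $\P\cap\Q\neq\emptyset$ implies (1) and (2), with no work.

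For the converse I would argue the contrapositive. The key reformulation is that statement~(1) fails for some $i$ exactly when $(H_\Q)_i x>(b_\Q)_i$ for all $x\in\P$, i.e.\ when the $i$-th facet hyperplane of $\Q$ strictly separates $\P$ from $\Q$; likewise (2) fails exactly when some facet hyperplane of $\P$ does. So it suffices to show that if $\P\cap\Q=\emptyset$ then some facet hyperplane of $\P$ or of $\Q$ separates the two sets. To produce such a hyperplane I would use compactness to select a nearest pair $p^\star\in\P$, $q^\star\in\Q$ and put $d=q^\star-p^\star\neq 0$; the optimality (projection) conditions for the nearest pair give $d\cdot x\le d\cdot p^\star$ for all $x\in\P$ and $d\cdot x\ge d\cdot q^\star>d\cdot p^\star$ for all $x\in\Q$, so $d$ already separates. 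Equivalently, $0\notin\Delta:=\Q\oplus(-\P)$ and $d$ is the point of the polytope $\Delta$ nearest the origin; since $0$ is exterior to $\Delta$, it lies strictly beyond some facet of $\Delta$, whose outward normal $e$ satisfies $e\cdot z<0$ for all $z\in\Delta$, i.e.\ $\max_{x\in\Q}e\cdot x<\min_{x\in\P}e\cdot x$. Since a facet of the Minkowski sum $\Q\oplus(-\P)$ is the sum of the $e$-maximal faces of $\Q$ and of $-\P$, one would like to conclude that $e$ is, up to sign, a facet normal of $\Q$ or of $\P$, and then the associated facet hyperplane separates, contradicting (1) or (2).

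The step I expect to be the obstacle is exactly this last one: forcing the separating direction $e$ to be a facet normal of one of the two polytopes. It works in $\real^2$, where every edge normal of a Minkowski sum of polygons is an edge normal of a summand — and the planar case covers the examples in the paper — but it is not valid in general, since for $n\ge 3$ a facet of $\Q\oplus(-\P)$ may be the sum of an edge of $\Q$ and an edge of $-\P$, in which case its normal is a facet normal of neither. Indeed the statement as written already fails in $\real^3$: let $\P=[-1,1]^3$ and $\Q=\{x:\|x-c\|_1\le 1\}$ with $c=(1.8,1.8,0)$. These are disjoint, because the $\ell_1$-distance from $c$ to $\P$ equals $0.8+0.8+0=1.6>1$; yet $\P$ meets every one of the eight defining halfspaces of $\Q$ (for each sign pattern $\sigma\in\{\pm1\}^3$ one has $\min_{x\in\P}\sigma\cdot x=-3\le 1+1.8(\sigma_1+\sigma_2)$), and $\Q$ meets every one of the six defining halfspaces of $\P$ (the coordinate ranges of $\Q$ are $[0.8,2.8]$, $[0.8,2.8]$, $[-1,1]$, which overlap $[-1,1]$ coordinatewise). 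To make the argument go through one should therefore either restrict the proposition to $n=2$, add the extra separating-axis directions coming from sums of lower-dimensional faces, or exploit the special structure present in the algorithm (that $\Omega$ is a union of axis-aligned boxes and that $\overline{P}_0([x])$ is a zonotope).
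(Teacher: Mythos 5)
Your analysis is correct, and the obstacle you isolate is precisely the gap in the paper's own argument. The paper proves necessity exactly as you do, and for sufficiency it argues by contraposition that $\P\cap\Q=\emptyset$ yields a separating hyperplane and then simply asserts that conditions 1) and 2) ``preclude the existence of this separating hyperplane.'' They do not: failure of condition 1) at index $i$ means the $i$-th facet hyperplane of $\Q$ separates the two polytopes (and symmetrically for 2)), so the two conditions only rule out separators that are facet hyperplanes of $\P$ or $\Q$, whereas for $n\ge 3$ two disjoint polytopes need not admit such a separator. Your counterexample is valid: with $\P=[-1,1]^3$ and $\Q=\{x:\|x-c\|_1\le 1\}$, $c=(1.8,1.8,0)$, the sets are disjoint (the $\ell_1$-distance from $c$ to $\P$ is $0.8+0.8=1.6>1$), yet $\P$ meets all eight halfspaces of $\Q$ and $\Q$ meets all six halfspaces of $\P$, so Proposition~\ref{prop:hyper} is false as stated. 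Your diagnosis via the normal fan of $\Q\oplus(-\P)$ is also the right explanation of why the planar case survives: in $\real^2$ every facet normal of a Minkowski sum of polygons is a facet normal of a summand, while in $\real^3$ a facet of the sum can arise from a pair of edges and contribute a separating direction that is a facet normal of neither polytope.

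For the paper, the consequence is that Proposition~\ref{prop:touch}, which rests on Proposition~\ref{prop:hyper}, only gives an outer approximation of $\mathcal{O}(\P,\Q)$ when $n\ge 3$: actual intersection implies the listed inequalities (the sound, necessity direction), but not conversely. Since Lemma~\ref{lem:u} \emph{subtracts} $\mathcal{O}(\overline{P}_0([x]),\Q)$, over-approximating it only shrinks $S\U([x])$, so the overall algorithm remains sound (it never certifies an unsafe input) but becomes conservative in dimension three and higher. The repairs you suggest are the right ones: restrict the proposition to $n=2$ (which covers the paper's numerical example), augment conditions 1)--2) with the separating directions generated by lower-dimensional faces (the full separating-axis test), or restate Proposition~\ref{prop:touch} as a superset of $\mathcal{O}(\P,\Q)$ and track the resulting conservatism through Lemma~\ref{lem:u}.
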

\begin{proof}
  Necessity is simple, since if $\P \cap \Q \ne \emptyset$, every
  $v \in {\P \cap \Q}$ will satisfy the existence conditions in 1) and
  2).

  To prove sufficiency, note that $\P \cap \Q = \emptyset$ if and only
  if there exists a separating hyperplane, defined by some
  $h_* \in \real^n$ and $b_* \in \real$, such that
  $\forall v \in \P, \ h^\top v \le b$ and
  $\forall v \in \Q, \ h^\top v > b$. Conditions 1) and 2) preclude
  the existence of this separating hyperplane, implying
  $\P \cap \Q \ne \emptyset$.
\end{proof}

The second intermediate result tells us how to translate a polytope
so that it intersects a given halfspace.
\begin{prop}\label{prop:half-touch}
  Given a polytope $\P$ and halfspace $\H = \{x : h^\top x \le b\}$,
  the set of translations, i.e., of $\P$ that intersect $\H$; i.e.
  $\mathcal{O}(\P, \H) \triangleq \{s \in \real^n : \P \oplus \{s\}
  \cap \H \ne \emptyset \}$, is given by
\begin{align*}
  \mathcal{O}(\P, \H) = \{s \in \real^n : \ h^\top s \leq b - \alpha \},
\end{align*}
where $\alpha = \min_{v\in V_\P} h^\top v$.
\end{prop}
\begin{proof}
  The reasoning is similar to \cite[Theorem 2.3]{gilbert1998}, with
  $\min$ replacing $\max$ because intersection, rather than
  containment, is required.
\end{proof}
Proposition~\ref{prop:touch} follows from the combination of
Proposition~\ref{prop:hyper} with repeated application of
Proposition~\ref{prop:half-touch} to every halfspace defining both
$\P$ and $\Q$ (with $-s$ replacing $s$ in the second part, as the
translation is applied to $\P$, not $\Q$).


\bibliographystyle{ieeetran}
{
  \bibliography{biblio}
}

\end{document}